
\documentclass[11pt,reqno]{amsart}
\usepackage{amsmath,mathtools,amsthm,amssymb}
\usepackage{graphicx}
\usepackage{epsfig,epsf,psfrag}
\usepackage{amsfonts}
\usepackage{setspace}
\usepackage{color}
\usepackage[font=small]{caption}
\usepackage[font=footnotesize]{subcaption}
\usepackage[top=1in, bottom=1in, left=1.2in, right=1.2in]{geometry}
\usepackage{afterpage}

\usepackage{isomath}

\newtheorem{theorem}{Theorem}[section]
\newtheorem{corollary}[theorem]{Corollary}
\newtheorem{lemma}[theorem]{Lemma}

\newtheorem{remark}[theorem]{Remark}


\title{Singularity swapping method for nearly singular integrals based on trapezoidal rule}

\author{Gang Bao, Wenmao Hua, Jun Lai, Jinrui Zhang} 
\address{School of Mathematical Sciences, Zhejiang University,
	Hangzhou, Zhejiang 310027, China}
\email{baog@zju.edu.cn, huawenmao@zju.edu.cn, laijun6@zju.edu.cn, zhangjinrui@zju.edu.cn}
\thanks{JL was supported by the ``Xiaomi Young Scholars" program from Xiaomi Foundation.}

\keywords{Nearly singular integrals, trapezoidal rule, boundary integral equations, quadrature}

\begin{document}
	
	\maketitle
	
	\begin{abstract}
		Accurate evaluation of nearly singular integrals plays an important role in many boundary integral equation based numerical methods. In this paper, we propose a variant of singularity swapping method to accurately evaluate the layer potentials for arbitrarily close targets. Our method is based on the global trapezoidal rule and trigonometric interpolation, resulting in an explicit quadrature formula.  The  method achieves spectral accuracy for nearly singular integrals on closed analytic  curves. In order to extract the singularity from the complexified distance function, an efficient root finding method is proposed based on contour integration. Through the change of variables, we also extend the quadrature method to integrals on the piecewise analytic curves.  Numerical examples for Laplace's and Helmholtz equations show that high order accuracy can be achieved for arbitrarily close field evaluation.
	\end{abstract}

	\section{Introduction}
	The method of boundary integral equations (BIE) has been widely used in the numerical computation of Laplace and wave scattering problems, examples including Helmholtz equations in acoustics\cite{JL2014}, Maxwell equations in electromagnetics\cite{LAI2019152}, and Navier equations in elasticity\cite{DLL21}.  The appealing advantages of BIE are that they automatically satisfy the radiation condition at infinity for wave scattering in unbounded region, and reduce the dimension of computational domain by one when the medium coefficient or wavenumber is piecewise constant. 
	
	
	However, these advantages do not come for free. As the boundary integral equations are constructed through layer potentials, one of the major difficulties encountered in the numerical application of BIE is the evaluation of singular or nearly singular integrals\cite{KLOCKNER2013332}, which, in the two dimensional setting, is typically given in the form of  
	\begin{equation}\label{layer potential}
		\int_{\Gamma}K(x,y)\phi(y)ds(y),\quad x\in\mathbb R^2,
	\end{equation}
	where $\Gamma$ is assumed to be a  closed analytic curve in $\mathbb{R}^2$. The density function $\phi$ is assumed to be analytic on $\Gamma$, and  $K$ is the kernel function that has a singularity as $x\to y$.  Generally, the kernel function $K(x,y)$ is of logarithmic form\cite{Klinteberg2020}
	\begin{equation}\label{logarithmic}
		K(x,y)=\log|x-y|k(x,y),
	\end{equation}
	(here we use $\mathit{log}$ to denote the natural logarithm) or of power-law form
	\begin{equation}\label{power-law form}
		K(x,y)=\frac{k(x,y)}{|x-y|^{2m}},
	\end{equation}
	where $m$ is a positive integer  and $k(x,y)$ is an analytic function in the neighborhood of $\Gamma$. Extension of integral \eqref{layer potential} to piecewise analytic curve will also be discussed later on. Applications of integral \eqref{layer potential} include the single and double layer potentials for Laplace's and Helmholtz equations. 
	
	When the target point $x$ is far away from the boundary, the integrand in \eqref{layer potential} is smooth, and high-order quadratures can be easily obtained by standard methods, such as Gauss-Legendre quadrature or trapezoidal rule. In particular, when $\Gamma$ is a closed analytic curve, trapezoidal rule gives spectral accuracy\cite{Trefethen2014} for smooth integrals, as reviewed in the following Section \ref{interpolatory quadrature}. On the other hand, when $x$ is on the boundary $\Gamma$,  integral \eqref{layer potential} becomes singular and a well-known method to evaluate it is through singularity extraction. Details can be found in \cite{Kress2010}. Difficulties arise when $x$ is near the curve, but not necessary on the curve, in which case the integral $\eqref{layer potential}$ is called the close evaluation problem. This can happen in many applications when the near field interaction is required, such as the multi-particle scattering in electromagnetics~\cite{Lai2015194}, or the evaluation of pressure gradient due to a force on a moving boundary in the Stokes equation \cite{Barnett2015}. It also happens for the on-boundary field evaluation when $\Gamma$ is wiggly shaped so that far away points in the parameter space may be very close geometrically. For the close evaluation problem, due to the sharply peaked kernel in the layer potential, standard quadrature rule produces an $O(1)$ error. 
	
	Due to the wide range applications of BIE, the close evaluation problem has been extensively studied in the literature. In \cite{Schwab1999OnTE}, a recursive method for the evaluation of potentials and their derivatives near and up to the boundary surface based on the asymptotic pseudo-homogeneous kernel expansions was constructed. In \cite{doi:10.1137/S0036142999362845}, a regularization type method was proposed by first regularizing the singularity, evaluating the regularized integral by a standard quadrature rule, and then adding a correction term to eliminate the error. In \cite{1514571}, the authors found a change of variable technique that canceled the principal singularity and then applied the Gauss-Legendre quadrature to the resulted integral. In \cite{HELSING20082899}, a barycentric interpolation for the Laplace's equation has been proposed to evaluate the nearly singular integral. In \cite{KLOCKNER2013332} and \cite{doi:10.1137/120900253}, quadrature by expansions(QBX) method has been developed based on the continuity of singular integrals near the boundary. It can efficiently evaluate the nearly singular integrals and is compatible with the fast multipole method(FMM). Other methods include singularity subtraction techniques \cite{CARVALHO2018327},  density subtraction techniques \cite{PEREZARANCIBIA2019411,2020arXiv200707965C}, special purpose quadratures \cite{2022arXiv220304854I}, etc. Although some of the methods are very efficient in the numerical implementation, the corrected integrands in many techniques remain singular or nearly singular in the sense that they contain large or unbounded higher order derivatives, resulting in only algebraic accuracy.
	
	Recently, Klinteberg and Barnett\cite{Klinteberg2020} proposed a singularity swapping method that  swaps the target singularity from the physical space into the parameter space, and integrates the transformed integrand by composite Gauss-Legendre quadrature and polynomial interpolation. It naturally extends the singularity subtraction technique from the on-boundary field evaluation to the near field evaluation, and achieves a uniform high-order accuracy for arbitrarily close targets. However, its convergence rate is still algebraic for integrals on smooth closed curves. In this paper, we introduce a variant of that method by using global trapezoidal rule and trigonometric interpolation for the transformed integrand. It results in an explicit quadrature formula with exponential convergence for integrals defined on closed analytic curves. We also propose a quadrature method to find the complex singularity efficiently based on the modified trapezoidal rule. The method is extended to integrals defined on piecewise analytic curves by substituting a suitable new variable and then applying the singularity swapping method for the transformed integral. Overall, our algorithm is simple, elegant and yet efficient, in the sense that it only needs to slightly modify the classic trapezoidal rule and still achieves rapid convergence rate.

	The remainder of this paper is organized as follows. Section \ref{interpolatory quadrature} presents an interpolatory quadrature on equidistant nodes for periodic nearly singular integrals. Section \ref{singularity swapping} presents a singularity swapping method that transforms the integral \eqref{layer potential} into the form discussed in Section \ref{interpolatory quadrature}. Applications to Laplace and Helmholtz layer potentials are also discussed in this section. The extension to non-periodic nearly singular integrals is given in Section \ref{section_nonperiodic_integrand}. We demonstrate the effectiveness of the proposed method by numerical examples presented in Section \ref{numerical examples}. Section \ref{conclusions} concludes our paper.
	

	\section{Periodic nearly singular integrals}\label{interpolatory quadrature}
	In this section, we propose a modified trapezoidal rule to accurately evaluate the periodic nearly singular integrals based on the error analysis of classic trapezoidal rule. It is well known that for an integral with $C^2$ integrand, the classic trapezoidal rule only gives second order accuracy. However, when the integrand is periodic and belongs to $C^{2k}$, Euler-Macularin formula shows it can achieve an accuracy of order $2k$. If the function is further assumed to be analytic,  trapezoidal rule gives spectral accuracy\cite{Trefethen2014}. Therefore, the convergence rate of trapezoidal rule highly depends on the smoothness of the integrand.  Here we briefly review the spectral convergence property of trapezoidal rule for periodic smooth integrals and then extend it to the nearly singular case. 
	
	\subsection{Review of trapezoidal rule}
	In this subsection, we are mainly concerned with the integral
	\begin{equation}\label{original integral}
		\int_0^{2\pi}f(t)dt,
	\end{equation}
	where $f$ is an analytic and periodic function on $[0,2\pi]$. Rewrite $f(t)=u(z)$ with $z=e^{it}$. The original integral \eqref{original integral} becomes
	\begin{equation}\label{integral}
		I(u) = \int_0^{2\pi}u(e^{it})dt = \int_{|z|=1}u(z)\frac{dz}{iz}.
	\end{equation}
	Throughout this paper, a complex contour such as $|z|=1$ is always to be understood as traversed once in the counterclockwise direction. Since $f$ is analytic on $[0,2\pi]$, by analytic extension, there exists $r>1$ such that $u$ is analytic  in the region $r^{-1}\le |z|\le r$. For any positive integer $n$, the  approximation to $I(u)$ using trapezoidal rule with $n$ quadrature nodes is simply given by
	\begin{equation}\label{trapezoidal rule}
		T_n(u) = \frac{2\pi}{n}\sum_{k=1}^nu(e^{2\pi ik/n}).
	\end{equation}
	To analyze the error of $T_n(u)-I(u)$, note that the function $\frac{1}{z^n-1}\frac{1}{iz}$
	has $n$ simple poles at $e^{2\pi ik/n}$, $k=1,2,\cdots, n$, with residues equal to $1/(in)$. By the residue theorem, it holds
	\begin{equation}\label{trapezoidal rule integral form}
		T_n(u) = \int_{|z|=r}\frac{u(z)}{z^n-1}\frac{dz}{iz} - \int_{|z|=r^{-1}}\frac{u(z)}{z^n-1}\frac{dz}{iz}.
	\end{equation}
	Since $u$ is analytic in $r^{-1}\le |z|\le r$, we can change $|z|=1$ to the circle $|z|=r^{-1}$ in equation \eqref{integral} without changing the value. Thus, combining \eqref{integral} and \eqref{trapezoidal rule integral form} gives the representation of $T_n(u)-I(u)$ as an integral,
	\begin{equation}\label{trapezoidal rule error integral form}
		T_n(u)-I(u) = \int_{|z|=r}\frac{u(z)}{z^n-1}\frac{dz}{iz} + \int_{|z|=r^{-1}}\frac{u(z)}{z^{-n}-1}\frac{dz}{iz}.
	\end{equation}
	Using \eqref{trapezoidal rule error integral form} and the boundedness of $u(z)$, we obtain the exponential convergence of trapezoidal rule. 
	\begin{lemma}\label{lemma1}
		If	$u$ is analytic in the annulus $r^{-1}\leqslant|z|\leqslant r$ for some $r>1$, then
		\begin{equation}\label{trapezoidal rule error asymptotic form}
			T_n(u)-I(u) = O(r^{-n}), \mbox{ when } \quad n\to\infty.
		\end{equation}
	\end{lemma}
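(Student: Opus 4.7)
The plan is to start directly from the integral representation \eqref{trapezoidal rule error integral form}, which has already been derived from the residue theorem in the preceding paragraph, and then simply estimate each of the two contour integrals by the standard ML-inequality. Since $u$ is analytic on the closed annulus $r^{-1}\le|z|\le r$, the quantity $M := \max_{r^{-1}\le|z|\le r}|u(z)|$ is finite, and this will be the boundedness input needed.

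First I would handle the integral over $|z|=r$. On that circle, $|z^n-1|\ge r^n-1$ by the reverse triangle inequality, and $|dz/(iz)|$ has total arclength $2\pi$ (since $|1/(iz)|=1/r$ and the length of the contour is $2\pi r$). Hence
\begin{equation*}
\left|\int_{|z|=r}\frac{u(z)}{z^n-1}\frac{dz}{iz}\right| \le \frac{2\pi M}{r^n-1}.
\end{equation*}
Next I would handle the integral over $|z|=r^{-1}$ similarly. On that circle $|z^{-n}|=r^n$, so $|z^{-n}-1|\ge r^n-1$, and again $|dz/(iz)|$ integrates to $2\pi$ (now $|1/(iz)|=r$ against arclength $2\pi/r$). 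Thus
\begin{equation*}
\left|\int_{|z|=r^{-1}}\frac{u(z)}{z^{-n}-1}\frac{dz}{iz}\right| \le \frac{2\pi M}{r^n-1}.
\end{equation*}

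Combining the two estimates with \eqref{trapezoidal rule error integral form} gives $|T_n(u)-I(u)|\le 4\pi M/(r^n-1)$, which is $O(r^{-n})$ as $n\to\infty$, as required.

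There is really no serious obstacle here: all the analytic content (the residue calculation and the contour deformation using analyticity in the annulus) was already dispatched in the setup leading to \eqref{trapezoidal rule error integral form}. The only care needed is to make sure the lower bound $|z^n-1|\ge r^n-1$ (respectively $|z^{-n}-1|\ge r^n-1$) is applied on the correct circle, and to keep the Jacobian factor $1/(iz)$ in the accounting so that the two bounds combine cleanly.
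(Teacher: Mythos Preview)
Your proof is correct and is exactly the argument the paper has in mind: the paper simply says ``Using \eqref{trapezoidal rule error integral form} and the boundedness of $u(z)$'' and leaves the ML-estimate implicit, which you have written out explicitly. The bound $|T_n(u)-I(u)|\le 4\pi M/(r^n-1)$ is precisely the intended inequality.
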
 
	
	Based on  \eqref{trapezoidal rule error integral form}, one also finds that  $T_n(u)=I(u)$ for $u(z)=z^{-n+1},\dots,z^{n-1}$ and $z^n-z^{-n}$. Therefore, the trapezoidal rule can be treated as a Gaussian quadrature on the unit circle $|z|=1$\cite{Trefethen2014}.
	
	\subsection{Nearly singular integrals}\label{subsection nearly singular integral}
	We now turn to the evaluation of nearly singular integral defined on $|z|=1$, which is given in the form of
	\begin{equation}\label{nearly singular integral}
		I(Ku) = \int_{|z|=1}K(z)u(z)\frac{dz}{iz},
	\end{equation}
	where $u$ is analytic in the annulus $R^{-1}\leqslant|z|\leqslant R$ for some $R>1$ and $K$ is analytic in a smaller annulus $r^{-1}\leqslant|z|\leqslant r$ with $1<r<R$. 
	
	Presumably, $r$ is very close to $1$, which means $K$ is badly behaved on the unit circle. According to \eqref{trapezoidal rule error asymptotic form}, trapezoidal rule gives very poor convergence rate that is on the order of $O(r^{-n})$.  To overcome this difficulty, one way is to adaptively place very dense quadrature nodes on the unit circle that are close to the singularity of $K$. Another way is to numerically recalculate the weights of trapezoidal rule based on the variance of Euler-Macularin formula so that the singularity of $K$ is taken into account\cite{Alpert1999}. Both ways will only lead to an algebraic convergence of \eqref{nearly singular integral}. Here we propose a  modified trapezoidal rule,  in which the quadrature weights can be explicitly obtained and the error still keeps exponential decay. The main idea is to replace $K$ with its Laurent expansion $K_n$ and then apply the trapezoidal rule to $K_n u$. 
	
	More precisely, let
	\begin{equation}\label{taylor expansion}
		K_n(z) = \frac12c_{-n}z^{-n} + \sum_{k=-n+1}^{n-1}c_kz^k + \frac12c_nz^n
	\end{equation}
	be the $n$th order Laurent expansion of $K$ at zero, where
	\begin{equation}\label{taylor coefficients}
		c_k = \frac1{2\pi i}\int_{|z|=1}K(z)z^{-k-1}dz.
	\end{equation}
	Note that the $n$-th term of the Laurent series \eqref{taylor expansion} is halved, as well as the $-n$-th term, which is not mandatory but we do this for symmetry and consistency with the classical trigonometric interpolation, as shown later. The modified trapezoidal rule for integral \eqref{nearly singular integral} is simply
	\begin{eqnarray}\label{modifedtrap}
		I(Ku)\approx T_{2n}(K_nu) = \frac{\pi}{n}\sum_{k=1}^{2n}(K_nu)(e^{\pi ik/n}).
	\end{eqnarray}
	Now let us analyze the error of formula \eqref{modifedtrap}. It follows from equation \eqref{trapezoidal rule integral form}  that
	\begin{equation}
		T_{2n}(K_nu) = \int_{|z|=R}\frac{K_n(z)}{z^{2n}-1}u(z)\frac{dz}{iz} - \int_{|z|=R^{-1}}\frac{K_n(z)}{z^{2n}-1}u(z)\frac{dz}{iz}.
	\end{equation}
	Write $K=K_n^-+K_n+K_n^+$, where
	\[K_n^-(z) = \sum_{k=-\infty}^{-n-1}c_kz^k + \frac12c_{-n}z^{-n},\quad K_n^+(z) = \frac12c_nz^n + \sum_{k=n+1}^{\infty}c_kz^k.\]
	Using the fact that $K_n^-u$ is analytic in $r^{-1}\leqslant|z|\leqslant R$ and $K_nu+K_n^+u$ is analytic in $R^{-1}\leqslant|z|\leqslant r$, it holds
	\begin{align}\label{interpolatory quadrature error integral form}
		& T_{2n}(K_nu)-I(Ku) \notag\\
		=& \int_{|z|=R}\left(\frac{K_n(z)}{z^{2n}-1}-K_n^-(z)\right)u(z)\frac{dz}{iz} \notag \\&- \int_{|z|=R^{-1}}\left(\frac{K_n(z)}{z^{2n}-1}+K_n(z)+K_n^+(z)\right)u(z)\frac{dz}{iz} \notag\\
		=& \int_{|z|=R}\left(\frac{K_n(z)}{z^{2n}-1}-K_n^-(z)\right)u(z)\frac{dz}{iz} + \int_{|z|=R^{-1}}\left(\frac{K_n(z)}{z^{-2n}-1}-K_n^+(z)\right)u(z)\frac{dz}{iz}.
	\end{align}
	From the Cauchy's estimate
	\begin{equation}\label{cauchy's estimate}
		|c_k| \leqslant \|K\|_\infty r^{-|k|}, \quad \mbox{ with } \|K\|_\infty:=\sup_{r^{-1}\leqslant|z|\leqslant r}|K(z)|,
	\end{equation}
	we have the following results on $|z|=R$
	\begin{eqnarray}
		\left|\frac{K_n(z)}{z^{2n}-1}\right| \leqslant \frac{\sum_{k=-n}^n\|K\|_\infty r^{-k}R^k}{R^{2n}-1}\leqslant \frac{\|K\|_\infty }{R^{2n}-1}\frac{(\frac Rr)^{n+1}}{\frac Rr-1},
		\label{first term} \\
		|K_n^-(z)| \leqslant \sum_{k=-\infty}^{-n}\|K\|_\infty r^kR^k = \frac{\|K\|_\infty (Rr)^{-n+1}}{Rr-1}.\label{second term}
	\end{eqnarray}
	Same estimates hold for $\left|\frac{K_n(z)}{z^{-2n}-1}\right|$ and $|K_n^+(z)|$ on $|z|=R^{-1}$. Combining \eqref{interpolatory quadrature error integral form}, \eqref{first term} and \eqref{second term} gives
	\begin{equation}\label{interpolatory quadrature error asymptotic form}
		|T_{2n}(K_nu)-I(Ku)| \leqslant 4\pi\|K\|_\infty\|u\|_\infty\left(\frac{(\frac Rr)^{n+1}}{(\frac Rr-1)(R^{2n}-1)}+\frac{(Rr)^{-n+1}}{Rr-1}\right).
	\end{equation}
	Furthermore, equation \eqref{interpolatory quadrature error integral form}  implies $T_{2n}(K_nu)=I(Ku)$ for $u(z)=z^{-n+1},\dots,z^{n-1}$ and $z^n+z^{-n}$.
	
	To summarize, we have the following result.
	\begin{theorem}\label{modifiedtrap}
		Assume	$u$ is analytic in the annulus $R^{-1}\leqslant|z|\leqslant R$ for some $R>1$ and $K$ is analytic in a smaller annulus $r^{-1}\leqslant|z|\leqslant r$ with $1<r<R$. It holds
		\begin{eqnarray}\label{singquaderror}
			T_{2n}(K_nu)-I(Ku) = O((Rr)^{-n}), \quad n\to\infty.
		\end{eqnarray}
	\end{theorem}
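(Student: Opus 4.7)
My plan is to extract the claimed asymptotic rate directly from the explicit non-asymptotic bound \eqref{interpolatory quadrature error asymptotic form} that has already been derived in the preceding discussion. No new integral identities are needed; the argument reduces to analyzing the two summands on the right-hand side of \eqref{interpolatory quadrature error asymptotic form} as $n\to\infty$.

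For the first summand I would rewrite
\[
\frac{(R/r)^{n+1}}{(R/r-1)(R^{2n}-1)} \;=\; \frac{R/r}{R/r-1}\cdot\frac{1}{1-R^{-2n}}\cdot(Rr)^{-n},
\]
using the identity $(R/r)^{n}/R^{2n}=(Rr)^{-n}$; since $R>r>1$ the prefactor stays bounded, so this term is $O((Rr)^{-n})$. The second summand is $\frac{Rr}{Rr-1}(Rr)^{-n}$, manifestly $O((Rr)^{-n})$. Adding the two contributions yields the theorem.

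Should one wish to reconstruct the supporting bound \eqref{interpolatory quadrature error asymptotic form} from scratch, the one nontrivial step would be the contour-splitting identity \eqref{interpolatory quadrature error integral form}. I would first apply the trapezoidal-rule integral representation \eqref{trapezoidal rule integral form} to $K_n u$, whose only singularities are at $z=0$ and $z=\infty$, then decompose $K=K_n^-+K_n+K_n^+$ and use the analyticity of $K_n^- u$ in $r^{-1}\le|z|\le R$ and of $(K_n+K_n^+)u$ in $R^{-1}\le|z|\le r$ to deform the reference contour $|z|=1$ outward to $|z|=R$ and inward to $|z|=R^{-1}$, respectively. Pointwise control of each resulting integrand via Cauchy's estimate \eqref{cauchy's estimate}, followed by summation of two geometric series, then produces \eqref{interpolatory quadrature error asymptotic form}. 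The main subtlety is a bookkeeping one: the symmetric ``half-weight'' convention in the definition \eqref{taylor expansion} of $K_n$ is what makes the two sides of the split balance, so that the final rate is the symmetric $(Rr)^{-n}$ rather than a one-sided $R^{-n}$ or $r^{-n}$.
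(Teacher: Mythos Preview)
Your proposal is correct and follows the paper's approach exactly: the explicit bound \eqref{interpolatory quadrature error asymptotic form} is derived in the text immediately preceding the theorem, and the theorem is stated as a summary of that bound. Your algebraic rewriting of the two summands to exhibit the common factor $(Rr)^{-n}$ makes explicit the one step the paper leaves to the reader.
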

	
	Note that the presented method can be regarded as interpolating $u$ at $e^{2\pi ik/(2n)}$ into the form of $a_{-n+1}z^{-n+1}+\cdots+a_{n-1}z^{n-1}+a_n(z^n+z^{-n})$ and then integrating analytically, so it can be treated as an interpolatory quadrature. Compared to the original trapezoidal rule, it only has half of the polynomial order of accuracy, as we have preassigned $2n$ quadrature nodes at $e^{2\pi ik/(2n)}$. It is possible to construct a generalized Gaussian quadrature that exactly integrates $2n$ functions using only $n$ function evaluations \cite{Daan2009}, but that would require solving nonlinear equations and the corresponding quadrature nodes will vary with $K$ \cite{BGR2010}. From that perspective, the presented method of fixed quadrature nodes is more convenient for practical computation. 
	
	To illustrate the application of the modified trapezoidal rule \eqref{modifedtrap}, we give the explicit expressions for the nearly singular integrals of some typical kernel functions, which can be used to evaluate the near field of single and double layer potentials for Laplace's and Helmholtz equations.
	
	\begin{corollary}\label{coroll1}
		For the nearly singular integral of
		\begin{eqnarray}\label{logintegral}
			\log\left(4\sin\frac{t-t_0}{2}\sin\frac{t-\overline{t_0}}{2}\right)
		\end{eqnarray} with $t\in [0,2\pi]$ and $t_0\in\mathbb{C}$ close to the line segment $[0,2\pi]$. It holds
		\begin{equation}\label{log quadrature}
			\int_0^{2\pi}\log\left(4\sin\frac{t-t_0}{2}\sin\frac{t-\overline{t_0}}{2}\right)f(t)dt \approx \frac{\pi}{n}\sum_{j=0}^{2n-1}R_n(t_0,j\pi/n)f(j\pi/n).
		\end{equation}
		where
		\[R_n(t_0,t) = |\Im t_0| - 2\sum_{k=1}^{n-1}\frac{e^{-k|\Im t_0|}}{k}\cos k(t-\Re t_0) - \frac{e^{-n|\Im t_0|}}{n}\cos n(t-\Re t_0)\]
		is $n$-th order Laurent expansion of \eqref{logintegral}. 
	\end{corollary}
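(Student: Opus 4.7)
The plan is to identify $R_n(t_0,t)$ as the $n$-th order Laurent expansion (in $z = e^{it}$) of the log kernel in \eqref{logintegral}, after which the quadrature formula \eqref{log quadrature} follows by a direct application of Theorem \ref{modifiedtrap} with $K_n = R_n(t_0,\cdot)$ and $u(z) = f(t)$.

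The first step is to rewrite the kernel in closed real-trigonometric form. Applying the product-to-sum identity $\sin A\sin B = \tfrac12[\cos(A-B)-\cos(A+B)]$ with $A=(t-t_0)/2$ and $B=(t-\overline{t_0})/2$ gives $A-B = -i\Im t_0$ and $A+B = t-\Re t_0$, so
\[
4\sin\tfrac{t-t_0}{2}\sin\tfrac{t-\overline{t_0}}{2} \;=\; 2\cosh(\Im t_0)-2\cos(t-\Re t_0),
\]
which is real and strictly positive whenever $\Im t_0 \neq 0$, so that the logarithm has no branch ambiguity.

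Next I would feed this into the classical Fourier series $\log(1-2r\cos\theta+r^2) = -2\sum_{k\geq 1}\tfrac{r^k}{k}\cos k\theta$ (valid for $|r|<1$). Factoring $2\cosh(\Im t_0)-2\cos(t-\Re t_0) = e^{|\Im t_0|}(1-2r\cos\theta + r^2)$ with $r=e^{-|\Im t_0|}$ and $\theta = t-\Re t_0$ produces the full Fourier expansion
\[
\log\!\Bigl(4\sin\tfrac{t-t_0}{2}\sin\tfrac{t-\overline{t_0}}{2}\Bigr) \;=\; |\Im t_0| - 2\sum_{k=1}^{\infty}\frac{e^{-k|\Im t_0|}}{k}\cos k(t-\Re t_0).
\]
Rewriting each cosine in terms of $z^{\pm k}$ via $z = e^{it}$ exhibits this as a symmetric Laurent series whose coefficients decay at the geometric rate $e^{-|\Im t_0|}$; the kernel is therefore analytic in the annulus $e^{-\alpha}\leq|z|\leq e^{\alpha}$ for every $\alpha<|\Im t_0|$, which verifies the analyticity hypothesis of Theorem \ref{modifiedtrap}.

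Truncating this series at level $n$ according to the convention in \eqref{taylor expansion}, which halves the two endpoint terms $k=\pm n$, produces precisely the formula stated for $R_n(t_0,t)$. The claim \eqref{log quadrature} then follows immediately by invoking Theorem \ref{modifiedtrap}. The main obstacle is the very first step: one has to verify that the product-to-sum identity carries over to complex $t_0$ and that the absolute value $|\Im t_0|$ appears correctly in both cases $\Im t_0>0$ and $\Im t_0<0$ (so that the factoring uses the correct $r<1$); the rest is a textbook Fourier truncation followed by a direct citation of the main theorem.
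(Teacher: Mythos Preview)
Your proposal is correct and is exactly the argument the paper leaves implicit: the paper states Corollary~\ref{coroll1} without proof, simply asserting that $R_n(t_0,t)$ is the $n$-th order Laurent expansion of the log kernel and that the quadrature follows from the modified trapezoidal rule. Your derivation via the product-to-sum identity and the classical series $\log(1-2r\cos\theta+r^2)=-2\sum_{k\ge1}\tfrac{r^k}{k}\cos k\theta$ is precisely the missing verification, and the truncation with halved endpoint terms matches the convention~\eqref{taylor expansion} as you note.
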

	When $\Im t_0=0$, formula \eqref{log quadrature} reduces to the well known Kress quadrature~\cite{Kress2010}:
	\begin{equation}\label{Kress quadrature}
		\int_0^{2\pi}\log\left(4\sin^2\frac{t-t_0}{2}\right)f(t)dt \approx \frac{\pi}{n}\sum_{j=0}^{2n-1}R_n(t_0,j\pi/n)f(j\pi/n).
	\end{equation}
	Note that the vector $\{R_n(t_0,j\pi/n)\}_{j=0}^{2n-1}$ can be calculated by the Fast Fourier Transform (FFT). In the next section, we will reduce the Laplace's and Helmholtz single layer potential to the evaluation of \eqref{logintegral}. 
	\begin{corollary}\label{coroll2}
		When $K(z)=\frac1{z-z_0}$, the modified trapezoidal rule yields
		\begin{equation}\label{reciprocal}
			I(Ku) \approx \begin{cases}
				T_{2n}\left(\frac{2-(z/z_0)^n-(z/z_0)^{n+1}}{2(z-z_0)}u\right) , & \quad |z_0|>1, \\
				\frac12T_{2n}\left(\frac{2-(z/z_0)^n-(z/z_0)^{n+1}}{2(z-z_0)}u + \frac{2-(z_0/z)^n-(z_0/z)^{n-1}}{2(z-z_0)}u\right) , & \quad |z_0|=1, \\
				T_{2n}\left(\frac{2-(z_0/z)^n-(z_0/z)^{n-1}}{2(z-z_0)}u\right) , & \quad |z_0|<1. \\
			\end{cases}
		\end{equation}
	\end{corollary}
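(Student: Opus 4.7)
The plan is to compute the $n$-th order Laurent expansion $K_n(z)$ of $K(z)=1/(z-z_0)$ explicitly from \eqref{taylor expansion}--\eqref{taylor coefficients}, identify it with the closed-form rational function appearing in the corollary, and then invoke the modified trapezoidal rule \eqref{modifedtrap}. Since the annulus of analyticity of $K$ around the unit circle depends on the location of $z_0$, I will handle the two generic cases first and treat $|z_0|=1$ as a limiting case.

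For $|z_0|>1$, the function $K$ is analytic in $\{|z|<|z_0|\}$ and on the unit circle admits the Taylor expansion $\frac{1}{z-z_0}=-\sum_{k=0}^{\infty}z_0^{-k-1}z^k$. Reading off $c_k=-z_0^{-k-1}$ for $k\geq 0$ and $c_k=0$ for $k<0$, formula \eqref{taylor expansion} yields
\[
K_n(z)=-\sum_{k=0}^{n-1}z_0^{-k-1}z^k-\tfrac{1}{2}z_0^{-n-1}z^n.
\]
Summing the geometric series via $\sum_{k=0}^{n-1}(z/z_0)^k=(1-(z/z_0)^n)/(1-z/z_0)$ and combining with the half-weighted top term over the common denominator $2(z-z_0)$ produces the closed form $\frac{2-(z/z_0)^n-(z/z_0)^{n+1}}{2(z-z_0)}$, after which \eqref{modifedtrap} delivers the quoted quadrature.

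For $|z_0|<1$, $K$ is analytic outside $\{|z|\leq|z_0|\}$ and on the unit circle has the Laurent expansion $\frac{1}{z-z_0}=\sum_{k=1}^{\infty}z_0^{k-1}z^{-k}$. A symmetric calculation with $c_{-k}=z_0^{k-1}$ for $k\geq 1$ and $c_k=0$ for $k\geq 0$ gives $K_n(z)=\sum_{j=1}^{n-1}z_0^{j-1}z^{-j}+\tfrac{1}{2}z_0^{n-1}z^{-n}$, which by the same telescoping step collapses into $\frac{2-(z_0/z)^n-(z_0/z)^{n-1}}{2(z-z_0)}$. Applying \eqref{modifedtrap} completes this case.

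The step I expect to be most delicate is the borderline case $|z_0|=1$, where Theorem \ref{modifiedtrap} does not apply since $K$ is not analytic on the contour and $I(Ku)$ must be interpreted as a Cauchy principal value. The prescribed formula is the arithmetic mean of the two preceding rules, which is natural because each truncated kernel extends continuously to $|z_0|=1$ (provided $z_0$ is not itself a quadrature node), and the two rules represent the one-sided boundary values of the Cauchy-type integral from outside and inside the unit circle. The Plemelj--Sokhotski jump formula then identifies their mean with the principal value integral, justifying the averaged quadrature.
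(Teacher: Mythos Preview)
Your proposal is correct and follows precisely the approach implicit in the paper: compute the truncated Laurent series $K_n$ of $1/(z-z_0)$ from \eqref{taylor expansion}--\eqref{taylor coefficients}, collapse the geometric sum together with the half-weighted boundary term into the stated rational closed form, and then read off \eqref{modifedtrap}. The paper does not spell out a proof for this corollary, but your computation is exactly the intended one, and your identification of the $|z_0|=1$ case as a principal value (with the averaged formula justified via the Plemelj--Sokhotski relation) matches the paper's remark that in this case the left-hand side is a Cauchy principal value and a limiting interpretation is needed at quadrature nodes.
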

	Such a kernel can be appeared in the gradient of Laplace's and Helmholtz single layer potentials.  When $|z_0|=1$, the left-hand side exists as Cauchy principal value, and a limit is needed to take on the right-hand side when $z_0 = e^{\pi ik/n}$ for some $k=1,\cdots,2n$. One can also make use of the formula \eqref{reciprocal} to construct the quadrature rule for kernel \eqref{power-law form} with $m\ge 1$ through differentiation of $K(z)=\frac1{z-z_0}$.
	\begin{corollary}\label{coroll3}
		When the kernel function is given in the form of 
		\begin{eqnarray}\label{doubelintegral}
			\frac{1}{1-e^{-i(t-t_0)}}+\frac{1}{1-e^{i(t-\overline{t_0})}}
		\end{eqnarray}
		with $t\in[0,2\pi]$ and $t_0\in\mathbb{C}$ close to the line segment $[0,2\pi]$. For $\Im t_0>0$, the modified trapezoidal rule yields
		\begin{equation}\label{inverse quadrature}
			\int_0^{2\pi}\left[\frac{1}{1-e^{-i(t-t_0)}}+\frac{1}{1-e^{i(t-\overline{t_0})}}\right]f(t)dt \approx \frac{\pi}{n}\sum_{j=0}^{2n-1}K_n(t_0,j\pi/n)f(j\pi/n),
		\end{equation}
		where
		\[K_n(t_0,t) = \frac{1-\frac12e^{-in(t-t_0)}-\frac12e^{-i(n+1)(t-t_0)}}{1-e^{-i(t-t_0)}}+\frac{1-\frac12e^{in(t-\overline{t_0})}-\frac12e^{i(n+1)(t-\overline{t_0})}}{1-e^{i(t-\overline{t_0})}}\]
		is $n$-th order Laurent expansion of \eqref{doubelintegral}. 
	\end{corollary}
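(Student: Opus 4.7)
The plan is to reduce the statement directly to Theorem \ref{modifiedtrap} via the substitution $z = e^{it}$ and $z_0 = e^{it_0}$. Under this change, $dt = dz/(iz)$ on $|z|=1$; the analytic periodic density $f(t)$ corresponds to some $u(z)$ analytic in an annulus around the unit circle; and the kernel becomes
$$K(z) \;=\; \frac{1}{1 - z_0/z} \;+\; \frac{1}{1 - \overline{z_0}\,z}.$$
Because $\Im t_0 > 0$, we have $|z_0| = e^{-\Im t_0} < 1$, so the first summand has its only singularity at $z_0$ (inside the unit disk) and the second at $1/\overline{z_0}$ (outside). Hence $K$ is analytic in every annulus $r^{-1} \leq |z| \leq r$ with $1 < r < e^{\Im t_0}$, which is precisely the setting of Theorem \ref{modifiedtrap}.

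Next, I would compute the Laurent expansion of $K$ about the origin, valid on the unit circle. Expanding each summand as a geometric series,
$$\frac{1}{1 - z_0/z} \;=\; \sum_{k=0}^{\infty}\Bigl(\frac{z_0}{z}\Bigr)^{k}, \qquad \frac{1}{1 - \overline{z_0}\,z} \;=\; \sum_{k=0}^{\infty}(\overline{z_0}\,z)^{k},$$
with both series converging in the chosen annulus, gives Laurent coefficients $c_{-k} = z_0^{k}$ and $c_{k} = \overline{z_0}^{\,k}$ for $k \geq 1$, together with $c_0 = 2$.

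The only genuinely algebraic step is verifying that the closed-form $K_n(t_0,t)$ in the corollary equals the symmetrically truncated series $\tfrac{1}{2}c_{-n}z^{-n} + \sum_{k=-n+1}^{n-1} c_k z^{k} + \tfrac{1}{2} c_n z^{n}$ from \eqref{taylor expansion}. Splitting $c_0 = 2$ as $1+1$ lets the truncation decouple into two halves, each of the form $\sum_{k=0}^{n-1} w^{k} + \tfrac{1}{2} w^{n}$, with $w = z_0/z = e^{-i(t-t_0)}$ and $w = \overline{z_0}\,z = e^{i(t-\overline{t_0})}$ respectively. Applying the elementary telescoping identity
$$\sum_{k=0}^{n-1} w^{k} + \tfrac{1}{2} w^{n} \;=\; \frac{1 - \tfrac{1}{2}w^{n} - \tfrac{1}{2}w^{n+1}}{1 - w}$$
to each half then reproduces exactly the two fractions displayed in the statement. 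Invoking Theorem \ref{modifiedtrap} with the node correspondence $z = e^{\pi i k/n} \leftrightarrow t = k\pi/n$ converts \eqref{modifedtrap} into \eqref{inverse quadrature}.

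No serious obstacle is anticipated: all three steps are routine. The one point to watch is the pairing of the halved boundary terms $\tfrac{1}{2} c_{\pm n}$ with the half-factors $\tfrac{1}{2}e^{\pm in(t-t_0)}$ and $\tfrac{1}{2}e^{\pm i(n+1)(t-t_0)}$ appearing in the numerators of $K_n(t_0,t)$, which is exactly the content of the telescoping identity above.
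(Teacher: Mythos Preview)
Your proposal is correct and is essentially the approach implicit in the paper: the paper itself offers no detailed proof of Corollary~\ref{coroll3}, merely remarking that it ``is essentially the same as Corollary~\ref{coroll2},'' i.e., a direct instance of the modified trapezoidal rule of Theorem~\ref{modifiedtrap} once the Laurent expansion of the kernel is written down. Your write-up makes this explicit---the geometric-series expansion giving $c_{-k}=z_0^{k}$, $c_k=\overline{z_0}^{\,k}$, $c_0=2$, and the telescoping identity $\sum_{k=0}^{n-1}w^k+\tfrac12 w^n=(1-\tfrac12 w^n-\tfrac12 w^{n+1})/(1-w)$---which is precisely the computation the paper leaves to the reader.
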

	Corollary \eqref{coroll3} is essentially the same as Corollary \ref{coroll2}. Later in the paper, we will use it to compute the near field of double layer potential for Laplace's and Helmholtz equations.
	\section{Singularity swapping method}\label{singularity swapping}
	
	In the last section, we have proposed a modified trapezoidal rule to integrate the nearly singular integrals in the form of \eqref{nearly singular integral}. Compared to the original layer potential \eqref{layer potential}, there are two issues still remaining: the first is how to effectively find the singularity $z_0$,  and the second is how to easily construct the Laurent expansion for a given kernel function $K(x,\cdot)$ defined on a general smooth curve $\Gamma$. In this section, we introduce a variant of singularity swapping method \cite{Klinteberg2020} to address these two issues. For now we still assume $\Gamma$ is analytic, as the extension to piecewise analytic case will be discussed in the next section. 
	
	\subsection{Singularity cancellation}
	In this part, we mainly focus on the second issue and discuss the kernels given in the form of \eqref{logarithmic} and \eqref{power-law form}, more specifically, the single and double layer potentials for Laplace's equations, due to their wide applications in the potential theory. We expect the same idea can be extended to other kernels in a straightforward way. Identify $\mathbb R^2$ with $\mathbb C$ and assume that $\Gamma$ is parameterized by $g:\mathbb R\to\mathbb C$ that maps the interval $[0,2\pi)$ to $\Gamma$. Thus the parametric form of the single layer potential with logarithmic kernel \eqref{logarithmic} is
	\begin{equation}\label{parametric form}
		\int_0^{2\pi}\log|x-g(t)|^2f(t)dt, \quad x\in\mathbb C,
	\end{equation}
	where $f$ is an analytic function that consists of the density function and the Jacobian $|g'(t)|$. Bringing $x$ near $\Gamma$ introduces a singularity $t_0$ near real axis in the analytic continuation of $\log|x-g(t)|^2$. Specifically, it has a singularity when the complexified squared distance 
	\begin{eqnarray}\label{sqequ}
		|x-g(t)|^2 = (x_1-g_1(t))^2 + (x_2-g_2(t))^2	
	\end{eqnarray}
	goes to zero, which gives $x_1-g_1(t) = \pm i(x_2-g_2(t))$. Taking the negative case, the singularity $t_0$ thus obeys
	\begin{equation}\label{real equation of singularity}
		x_1+ix_2 = g_1(t_0)+ig_2(t_0),
	\end{equation}
	and one may check by Schwarz reflection that the positive case gives its conjugate $\bar t_0$.  Equation \eqref{real equation of singularity} can be rewritten as $x = g(t_0)$. The method to efficiently find $t_0$ will be discussed in the next subsection \ref{rootfinding}. For now we assume that $g:\mathbb C\to\mathbb C$ is 2$\pi$-periodic, analytic and single-valued in a sufficiently large strip $\{t\in\mathbb C:|\Im t|<\log r\}$, so the only solution to $x=g(t_0)$ is $t_0=g^{-1}(x)$, which is called the preimage of $x$ under $g$. 
	
	Now we rewrite the integral \eqref{parametric form} as
	\begin{equation}\label{singswap}
		\int_0^{2\pi}\log\left(\frac{|x-g(t)|^2}{4\sin\frac{t-t_0}{2}\sin\frac{t-\overline{t_0}}{2}}\right)f(t)dt + \int_0^{2\pi}\log\left(4\sin\frac{t-t_0}{2}\sin\frac{t-\overline{t_0}}{2}\right)f(t)dt,
	\end{equation}
	The first integral in \eqref{singswap} is analytic in a larger domain $\{t\in\mathbb C:|\Im t|<\log r\}$, so the classic trapezoidal rule \eqref{trapezoidal rule} can be used. The second integral can be numerically evaluated by the modified trapezoidal rule \eqref{log quadrature} in Corollary \ref{coroll1}. Hence, the singularities $t_0$ and $\overline{t_0}$ from $\log|x-g(t)|^2$ is swapped to the function $\log\left(4\sin\frac{t-t_0}{2}\sin\frac{t-\overline{t_0}}{2}\right)$ through \eqref{singswap}.
	
	For the Laplace double layer potential, it is given by
	\begin{equation}\label{laplace double layer potential}
		\int_0^{2\pi}\frac{(x-g(t))\cdot n(g(t))}{|x-g(t)|^2}|g'(t)|f(t)dt = \Im\int_0^{2\pi}\frac{g'(t)}{x-g(t)}f(t)dt,
	\end{equation}
	where $n(g(t))$ is unit outward normal vector of $\Gamma$ at $g(t)$. One way to swap its singularity is given by~\cite{Klinteberg2020} 
	\begin{equation}\label{Klinteberg method}
		\int_0^{2\pi}\frac1{1-e^{-i(t-t_0)}}\frac{(1-e^{-i(t-t_0)})g'(t)}{x-g(t)}f(t)dt.
	\end{equation}
	The second term $\frac{(1-e^{-i(t-t_0)})g'(t)}{x-g(t)}f(t)$ in \eqref{Klinteberg method} is smooth and plays the same role as $f(t)$ in \eqref{inverse quadrature}. We can therefore follow the quadrature rule given by \eqref{inverse quadrature} to discretize \eqref{Klinteberg method}. However, there is another approach to swap the singularity:
	\begin{equation}\label{laplace double layer potential swapped}
		\int_0^{2\pi}\left[\frac{g'(t)}{x-g(t)}-\frac{-i}{1-e^{-i(t-t_0)}}\right]f(t)dt + \int_0^{2\pi}\frac{-i}{1-e^{-i(t-t_0)}}f(t)dt,
	\end{equation}
	which we have found is numerically more accurate than \eqref{Klinteberg method}.
	
	One issue for the double layer evaluation is that when the target point $x$ is close to one of the sources $g(j\pi/n)$ with $0\leqslant j\leqslant 2n-1$, the accuracy is lost due to the singularity at $t=j\pi/n$. Similar phenomenon has also been observed in \cite{Klinteberg2020}. To address this issue, we apply the singularity subtraction technique by rewriting \eqref{laplace double layer potential} into 
	\begin{equation}\label{subtraction technique}
		\Im\int_0^{2\pi}\frac{g'(t)}{x-g(t)}(f(t)-f(j\pi/n))dt + \mu(x)f(j\pi/n),
	\end{equation}
	where
	\begin{equation}\label{mu}
		\mu(x) := \Im\int_0^{2\pi}\frac{g'(t)}{x-g(t)}dt = \begin{cases}
			-2\pi, & x\in D, \\
			-\pi, & x\in\Gamma, \\
			0, & x\in\mathbb R^2\backslash\overline{D},
		\end{cases}
	\end{equation}
	and $D$ is the bounded domain with boundary $\Gamma$. Then swap the singularity as in \eqref{laplace double layer potential swapped}. The new integrand vanishes at $t=j\pi/n$, so the rounding errors can be ignored.  There is no such simple subtraction techniques for single layer potentials \cite{2020arXiv200707965C}. However, it is also not necessary since the single layer potential suffers much less from cancellation errors, as one would see in the numerical experiments.
	
	As a comment on the extension of the techniques mentioned above to many other standard linear PDEs of interest (Helmholtz, Maxwell, Navier, etc), the integrals that we need to evaluate may be written as a combination of \eqref{parametric form} and \eqref{laplace double layer potential}. For example, for the Hemholtz single layer potential, the kernel $K(x,y)$ can be split by
	\begin{equation}\label{hemholtz single layer potential}
		\frac i4H_0^{(1)}(k|x-y|) = -\frac1{2\pi}J_0(k|x-y|)\log|x-y| + \text{smooth residue term},
	\end{equation}
	and for the Helmholtz double layer potential, the kernel $K(x,y)$ can be split by
	\begin{align}
		\frac{ik}4H_1^{(1)}(k|x-y|)\frac{(x-y)\cdot n(y)}{|x-y|} =& -\frac{k}{2\pi}\frac{J_1(k|x-y|)}{|x-y|}(x-y)\cdot n(y)\log|x-y| \notag\\
		&+ \frac1{2\pi}\frac{(x-y)\cdot n(y)}{|x-y|^2} + \text{smooth residue term}.\label{hemholtz double layer potential}
	\end{align}
	The first terms in both \eqref{hemholtz single layer potential} and \eqref{hemholtz double layer potential} have logarithmic type singularities and the second term in \eqref{hemholtz double layer potential} is the kernel of Laplace double layer potential. Therefore one may apply the same technique as Laplace to evaluate them. 
	
	\subsection{Quadrature method to find the pole}\label{rootfinding}
	Now the only missing ingredient in the scheme is how to find the singularity $t_0=g^{-1}(x)$ efficiently. Newton iteration is definitely an available option. However, it requires the evaluation of function values in the complex plane and the efficiency highly depends on the initial guess. Here we propose a quadrature method to find an approximation of $t_0$, which does not need any initial guess and in many situations is good enough to be directly applied in the singularity swapping method. 
	
	For the convenience of derivation, let $z_0=e^{it_0}$ and $g(t)=\gamma(e^{it})$, so that the equation $g(t_0)=x$ becomes $\gamma(z_0)=x$. Suppose $u$ is analytic in the annulus $r^{-1}\leqslant|z|\leqslant r$ except a simple pole $z_0$ with residue equal to $W$, where $z_0$ and $W$ are unknown. Similar to the analysis of equation \eqref{trapezoidal rule integral form}, it holds
	\begin{equation}\label{equation of pole 1}
		T_{2n}(z^{n}u) + \frac{2\pi Wz_0^{n-1}}{z_0^{2n}-1} = \int_{|z|=r}\frac{z^{n}u(z)}{z^{2n}-1}\frac{dz}{iz} - \int_{|z|=r^{-1}}\frac{z^{n}u(z)}{z^{2n}-1}\frac{dz}{iz} = O(r^{-n}),
	\end{equation}
	and 
	\begin{equation}\label{equation of pole 2}
		T_{2n}(z^{n+1}u) + \frac{2\pi Wz_0^{n}}{z_0^{2n}-1} = \int_{|z|=r}\frac{z^{n+1}u(z)}{z^{2n}-1}\frac{dz}{iz} - \int_{|z|=r^{-1}}\frac{z^{n+1}u(z)}{z^{2n}-1}\frac{dz}{iz} = O(r^{-n}).
	\end{equation}
	From the two formulas above, we can conclude that
	\begin{equation}\label{approximation of pole}
		\frac{T_{2n}(z^{n+1}u)}{T_{2n}(z^{n}u)}=z_0 + O(\max(|z_0|^n,|z_0|^{-n})r^{-n}),
	\end{equation}
	which provides a method to find the pole $z_0$ with high accuracy. The advantage of this formula is only function values on the unit circle are used and does not need any initial guess. To find an approximate solution to equation $\gamma(z_0)=x$, we can apply \eqref{approximation of pole} to $u(z)=\frac{\gamma'(z)}{\gamma(z)-x}$, which gives
	\begin{equation}\label{approximation of singularity}
		\hat z_0 := \frac{T_{2n}\left(\frac{z^{n+1}\gamma'(z)}{\gamma(z)-x}\right)}{T_{2n}\left(\frac{z^{n}\gamma'(z)}{\gamma(z)-x}\right)} = z_0 + O(\max(|z_0|,|z_0|^{-1})^nr^{-n}).
	\end{equation}

	In order to numerically test the accuracy of $\hat z_0$, we compare the performance of three different quadrature methods in the evaluation of nearly singular integrals. The first method is the classical trapezoidal rule given by \eqref{trapezoidal rule}, the second method is the interpolation quadrature given by \eqref{reciprocal} with exact $z_0$, and the third method is also the interpolation quadrature \eqref{reciprocal} but with $z_0$ replaced by $\hat z_0$. Figure \ref{test quadratures} tests the convergence of three integrals that are defined on $|z|=1$ and have the same nearly singular kernel $(z-1.1)^{-1}$. On top of the kernel, the three integrands are $z^{-10}+z^{10}$, $e^z$ and $(z-2)^{-1}$, respectively. For the first function $z^{-10}+z^{10}$, our analysis indicates that the interpolation quadrature \eqref{reciprocal} with $z_0=1.1$ is exact for $n\geqslant20$, which is clearly shown in Figure \ref{test quadratures}(a).  Using $\hat z_0$ only loses one algebraic accuracy. The second function $e^z$ is entire in $\mathbb{C}$. The interpolation quadrature achieves super-geometric convergence. Using $\hat z_0$ only causes the convergence speed to slow down slightly, as shown in \ref{test quadratures}(b). The third function $(z-2)^{-1}$ is analytic in a neighborhood of $|z|=1$ but not throughout the complex plane. By Theorem \ref{modifiedtrap}, the exact interpolation quadrature has a geometric convergence that is on the order of $2.2^{-n/2}$, which is confirmed by Figure \ref{test quadratures}(c). Here using $\hat z_0$ causes the convergence error to grow by a factor of $n$, which is still geometrically convergent. On the other hand, application of the classic trapezoidal rule converges very slow throughout all these three integrals. The results clearly show the interpolation quadrature can significantly improve the convergence rate, either using exact $z_0$ or the approximate value $\hat z_0$. 
	\begin{figure}[htbp]
		\centering
		\includegraphics[scale=0.9]{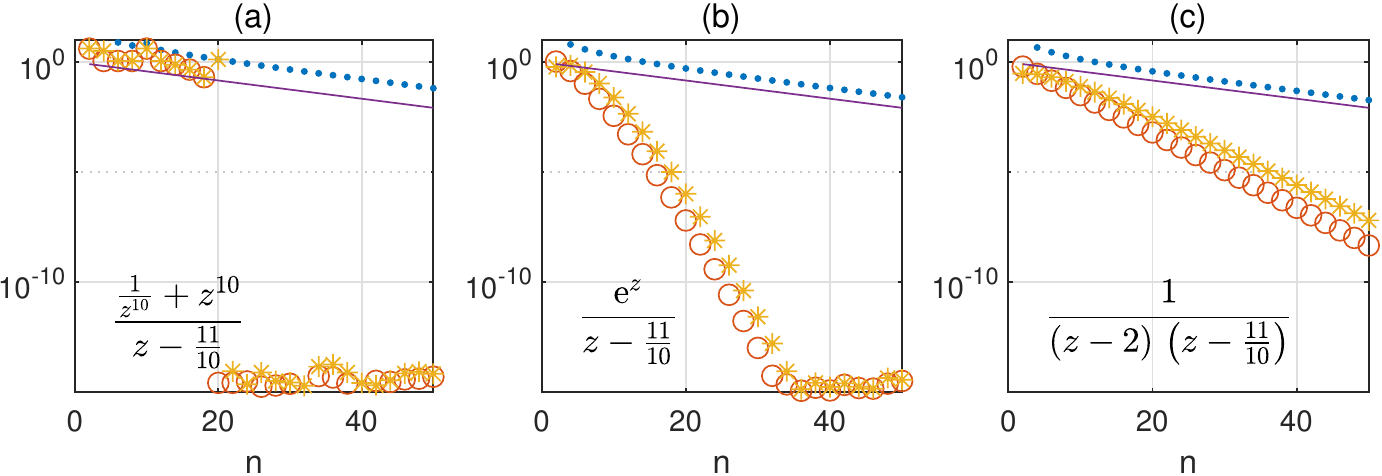}
		\caption{Relative error of classic trapezoidal rule (dots), interpolation quadrature given by \eqref{reciprocal} (circles) and interpolation quadrature \eqref{reciprocal} with $z_0$ replaced by $\hat z_0$ (stars) for three functions with the number of quadrature nodes $n$ ranging from 2 to 50. The solid curves is $1.1^{-n}$.}\label{test quadratures}
	\end{figure}
	
	To summarize, numerical observation indicates replacing $z_0$ with $\hat z_0$ into the quadrature formula  \eqref{reciprocal} gives a new residue term
	\begin{equation}\label{replace error}
		O(n\max(|z_0|,|z_0|^{-1})^{-n}r^{-n}),
	\end{equation} 
	which is slightly larger than the approximation error analyzed in \eqref{singquaderror}. We briefly analyze this term by taking $|z_0|>1$ as an example:
	\begin{align*}
		I(u) =& T_{2n}\left(u\right) - z_0^{-n}T_{2n}\left(\frac{1+z/z_0}{2}z^nu\right) + O(|z_0|^{-n}r^{-n}) \\
		=& T_{2n}\left(u\right) - \hat z_0^{-n}\left(1+O(|z_0|^nr^{-n})\right)^{-n}T_{2n}\left(\frac{1+z/z_0}{2}z^nu\right) + O(|z_0|^{-n}r^{-n}) \\
		=& T_{2n}\left(u\right) - \hat z_0^{-n}T_{2n}\left(\frac{1+z/\hat z_0}{2}z^nu\right) + O(n|z_0|^{-n}r^{-n}).
	\end{align*}
	The last identity holds because $T_{2n}\left(\frac{1+z/z_0}{2}z^nu\right)=O(|z_0|^{-n})$. In a word, the factor $n$ in \eqref{replace error} comes from the $n$-th power in the error term of \eqref{singquaderror}.
	
	If such an error is unacceptable, usually one Newton iteration
	\begin{equation}\label{newton iteration}
		\hat z_0 \leftarrow \hat z_0 - \frac{\gamma(\hat z_0)-x}{\gamma'(\hat z_0)}
	\end{equation}
	is enough to eliminate the error caused by singularity approximation. In order to improve the efficiency further, we can decrease the number of nodes in \eqref{approximation of singularity} and increase the number of Newton iterations. It is numerically found that for the same accuracy, one Newton iteration is roughly equivalent to reducing the number of nodes by half. By balancing the operations of the two parts, we can obtain a good approximation $\hat z_0$ within almost $O(n)$ operations.
	
	\section{Non-periodic nearly singular integrals}\label{section_nonperiodic_integrand}
	Now let us extend the interpolatory quadrature introduced in Section \ref{interpolatory quadrature} to non-periodic nearly singular integrals. Assume $\Gamma$ is a piecewise analytic closed or open curve, that may have finitely many corners. It is well known that when using integral  representation \eqref{layer potential} for Laplace's or Helmholtz equations with boundary $\Gamma$, corners or endpoints will introduce extra singularities on the density function $\phi$. It was shown in \cite{SERKH2016150} that the density function $\phi$ for boundary value problem of Laplace's equations has singularity of this type
	\begin{eqnarray}
		\phi(r) &=& Cr^{\alpha-1} + \text{smoother term}, \quad \text{for single layer potential}\label{corner-single}, \\
		\phi(r) &=& C_1+C_2r^{\alpha} + \text{smoother term}, \quad \text{for double layer potential}\label{corner-double},
	\end{eqnarray}
	where $r$ is the distance from the corner or endpoint of $\Gamma$, $C,C_1,C_2$ are constants. The exponent $\alpha=\frac{\pi}{\pi+|\pi-\theta|}$, where $\theta$ is the interior angle of the corner, and $\theta=0$ for open arc. This property also holds for Helmholtz equation up to a logarithmic factor~\cite{DensitySingular}. Because of the corner singularity, a uniform mesh yields a very poor convergence.
	
	Take out one smooth piece of $\Gamma$ and still denote it as $\Gamma$. We focus on calculating the layer potential generated by this piece. Let $\Gamma$ be parameterized by an analytic function $g:[0,2\pi]\to\mathbb C$ with $|g'(t)|$ nonzero for all $t\in[0,2\pi]$. Rewriting the layer potential \eqref{layer potential} based on the parametrization  yields $$\int_0^{2\pi}f(t)dt$$ with non-periodic integrand $f$, which is analytic in $(0,2\pi)$ but may have singularities at $t=0$ or $t=2\pi$. Suppose the interior angles at $g(0)$ and $g(2\pi)$ are $\theta_1$ and $\theta_2$ respectively and let
	\begin{equation}\label{alpha}
		\alpha:=\min\left(\frac{\pi}{\pi+|\pi-\theta_1|},\frac{\pi}{\pi+|\pi-\theta_2|}\right).
	\end{equation}
	From \eqref{corner-single}, we have $f(t):=-\frac1{2\pi}\log|g(t)-x||g'(t)|\phi(t)=O(t^{\alpha-1})$ for single layer potential. To take proper care of this corner singularity, we follow the idea in \cite{Cot98} by substituting a new variable so that the derivative of the new integrand vanishes up to a certain order at the endpoints. Then apply the singularity swapping method given in Section \ref{singularity swapping} to the transformed integrals.
	
	More specifically, let the function $w: [0,2\pi]\to[0,2\pi]$ be one-to-one, strictly monotonically increasing and analytic in $(0,2\pi)$.  Assume that $w$ at the two endpoints vanishes up to order $p\in\mathbb N$, i.e., $w(t)=O(t^p)$ near $t=0$ and $w(t)=2\pi+O((t-2\pi)^p)$ near $t=2\pi$. We then substitute $t\to w(t)$ to obtain
	\begin{equation}\label{substituted integral}
		\int_0^{2\pi}f(t)dt = \int_0^{2\pi}w'(t)f(w(t))dt.
	\end{equation}
	Applying the trapezoidal rule to the transformed integral now yields the quadrature formula
	\begin{equation}\label{substituted trapezoidal rule}
		\int_0^{2\pi}f(t)dt \approx \frac{2\pi} n\sum_{j=0}^{n}{}''w'(2\pi j/n)f(w(2\pi j/n)),
	\end{equation}
	where $\sum{}''$ means that the first and last term need to be halved. A typical example for such a substitution is given by
	\begin{equation}\label{asin transform}
		\begin{aligned}
			w(t) =& \frac{(p-2)!!}{(p-3)!!}\int_0^t(t-\tau)(\sin \tau)^{p-2}d\tau \\
			=& t - \left(\sin t + \frac{(\sin t)^3}{6} + \frac{3(\sin t)^5}{40} + \cdots + \frac{(p-4)!!}{(p-3)!!}\frac{(\sin t)^{p-2}}{p-2}\right)\\
			=& t- I_p(\sin t), \quad\text{if $p\geqslant3$ is odd,}
		\end{aligned}
	\end{equation}
	where $I_p(\cdot)$ in \eqref{asin transform} is the truncated Taylor series of $\arcsin$ up to order $p-2$. There are many other transformations that can serve the purpose, and readers are referred to \cite{Elliott1998SigmoidalTA} for more details. We choose \eqref{asin transform} simply because its quadrature points are relatively uniformly distributed.
	
	The following theorem provides an error estimate for \eqref{substituted trapezoidal rule}. It can be derived from the Euler-Maclaurin formula, but here we provide an alternative proof using the residue theorem.
	\begin{theorem}\label{error of nonperiod trapezoidal rule}
		Assume that $u$ is analytic in the annulus $r^{-1}\leqslant|z|\leqslant r$ cut along the interval $[r^{-1},r]$ for some $r>1$. Assume furthur that $u(e^{it})=u(1)+C_3t^{\alpha-1}+o(t^{\alpha-1})$ as $t\to0^+$ and $u(e^{it})=u(e^{2\pi i})+C_4(2\pi-t)^{\alpha-1}+o(2\pi-t)^{\alpha-1}$ as $t\to2\pi^-$ for some $\alpha>1$ and constants $C_3,C_4$. Then
		\begin{equation}\label{singular interpolatory quadrature error asymptotic form}
			T_n(u)-I(u) = (C_3+C_4)\cos\left(\frac\pi2\alpha\right)\Gamma(\alpha)\zeta(\alpha)n^{-\alpha} + o(n^{-\alpha}), \quad n\to\infty,
		\end{equation}
		where $I(u)$ is given by \eqref{integral} and $T_n(u)$ is the non-periodic version of trapezoidal rule
		\[T_n(u) = \frac{2\pi}{n}\sum_{k=0}^n{}''u(e^{2\pi ik/n}).\]
	\end{theorem}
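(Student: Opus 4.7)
The plan is to adapt the residue-theorem derivation of \eqref{trapezoidal rule error integral form} so that the branch cut of $u$ along $[r^{-1},r]$ is respected. First, I would apply the residue theorem to $\frac{u(z)}{z^n-1}\frac{1}{iz}$ on a keyhole contour that traces the boundary of the cut annulus $\{r^{-1}\leqslant|z|\leqslant r\}\setminus[r^{-1},r]$, indented by small semicircles above and below the pole of $1/(z^n-1)$ at $z=1$. The residues at the remaining $n-1$ roots of unity $e^{2\pi ik/n}$ collect the interior sum $\frac{2\pi}{n}\sum_{k=1}^{n-1}u(e^{2\pi ik/n})$, while the two shrinking semicircles, each a half-residue with limiting boundary value $u(1)$ and $u(e^{2\pi i})$ respectively, supply precisely the halved endpoint contribution $\frac{\pi}{n}(u(1)+u(e^{2\pi i}))$. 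Combining this with the deformation identity $I(u)=\int_{|z|=r}\frac{u(z)\,dz}{iz}+\int_1^r\frac{[u](x)\,dx}{ix}$ (valid since $u$ is analytic in the cut annulus), together with its inner-circle analogue, produces the exact representation
\begin{equation*}
T_n(u)-I(u)=\int_{|z|=r}\frac{u(z)}{z^n-1}\frac{dz}{iz}+\int_{|z|=r^{-1}}\frac{u(z)}{z^{-n}-1}\frac{dz}{iz}+J_{\textnormal{cut}},
\end{equation*}
mirroring \eqref{trapezoidal rule error integral form}, where $[u](x)=u^+(x)-u^-(x)$ is the jump of $u$ across the cut and $J_{\textnormal{cut}}$ combines a principal-value integral of $[u](x)/((x^n-1)ix)$ on $[r^{-1},r]$ with a compensating half-integral that records the asymmetry between the two deformations of $I(u)$.

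Next, the two annular boundary integrals are $O(r^{-n})=o(n^{-\alpha})$ by the same estimate used in Lemma \ref{lemma1}, so all asymptotic content sits in $J_{\textnormal{cut}}$. Rescaling $x=e^{s/n}$ near $x=1$ sends $x^n-1\to e^s-1$ and $dx/(ix)\to ds/(in)$; away from a bounded neighborhood of $s=0$ the principal-value part and the compensating half-integral cancel modulo exponentially small remainders, reducing $J_{\textnormal{cut}}$ to
\begin{equation*}
J_{\textnormal{cut}}=\frac{1}{in}\,\mathrm{P.V.}\int_{-\infty}^{\infty}[u](e^{s/n})\,K(s)\,ds+o(n^{-\alpha}),\qquad K(s)=\begin{cases}\dfrac{1}{e^s-1},&s>0,\\[4pt]\dfrac{1}{1-e^{-s}},&s<0,\end{cases}
\end{equation*}
a kernel that is odd about $s=0$. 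The local form of the jump near $x=1$ is extracted by analytically continuing $f(t)=u(e^{it})$ into the strip $|\Im t|<\log r$: for $x=1\pm\delta$ the preimages above and below the cut are $t\approx\mp i\delta$ and $t\approx 2\pi\mp i\delta$, so the hypothesized endpoint expansions give $[u](1\pm\delta)\sim[u(1)-u(e^{2\pi i})]+(C_3 e^{\mp i\pi(\alpha-1)/2}-C_4 e^{\pm i\pi(\alpha-1)/2})\delta^{\alpha-1}$. The constant jump is killed by the odd symmetry of $K$ under PV, while the power-law part is evaluated by the Mellin identity $\int_0^\infty s^{\alpha-1}/(e^s-1)\,ds=\Gamma(\alpha)\zeta(\alpha)$, after which the four phase factors collapse via $\sin(\pi(\alpha-1)/2)=-\cos(\pi\alpha/2)$ to produce the coefficient $(C_3+C_4)\cos(\pi\alpha/2)\Gamma(\alpha)\zeta(\alpha)$.

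The main obstacle is this last step: carefully bookkeeping the four branch values of $u$ near $z=1$ (above/below the cut crossed with left/right of $1$), invoking the PV symmetry of $K$ to cancel the constant jump, and assembling the residual phases into the stated closed form. A secondary but routine matter is showing that the $o(t^{\alpha-1})$ remainders in the hypothesis contribute only $o(n^{-\alpha})$ to $T_n(u)-I(u)$; this follows by dominated convergence applied to the rescaled integrand on $|s|\leqslant n\log r$, using the exponential decay of $K(s)$ at infinity to discard the truncation tails.
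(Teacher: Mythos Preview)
Your proposal is correct and follows essentially the same route as the paper: both express $T_n(u)-I(u)$ as a contour integral over the boundary of the cut annulus via the residue theorem, discard the circular pieces as $O(r^{-n})$, localize the cut contribution to a neighborhood of $z=1$, rescale by $x=e^{s/n}$, insert the endpoint asymptotics of $u$, and evaluate the remaining integral through $\int_0^\infty s^{\alpha-1}/(e^s-1)\,ds=\Gamma(\alpha)\zeta(\alpha)$ with the four phase factors collapsing to $\cos(\pi\alpha/2)$. Your packaging in terms of the jump $[u]$ and the explicitly odd kernel $K(s)$ is a cosmetic variant of the paper's separate upper/lower-edge bookkeeping; in particular, your use of the oddness of $K$ to annihilate the constant jump $u(1)-u(e^{2\pi i})$ is exactly what the paper does silently when it passes from $u(1)+C_3(-i\log z)^{\alpha-1}$ to $C_3(-i\log z)^{\alpha-1}$ in its displayed chain of equalities.
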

	\begin{proof}
		Since $u$ has a branch cut along $[r^{-1},r]$, equation \eqref{trapezoidal rule error integral form} becomes
		\begin{equation}\label{singular trapezoidal rule error integral form}
			T_n(u)-I(u) = \int_{\Gamma_1}\frac{u(z)}{z^n-1}\frac{dz}{iz} + \int_{\Gamma_2}\frac{u(z)}{1-z^{-n}}\frac{dz}{iz}.
		\end{equation}
		where $\Gamma_1$ and $\Gamma_2$ are two contours shown in Figure  \ref{contour}. The integral in \eqref{singular trapezoidal rule error integral form} near $z=1$ exists as Cauchy principal value.
		\begin{figure}[ht]
			\centering
			\includegraphics[scale=0.8]{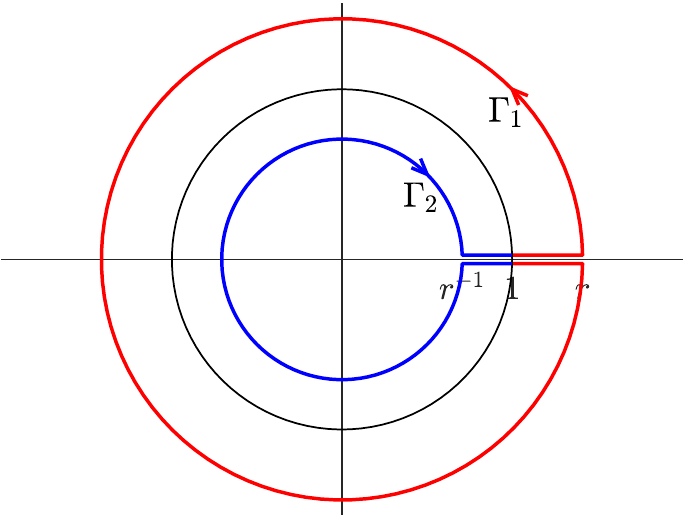}
			\caption{$\Gamma_1\cup \Gamma_2$ is the boundary of $\exp((-\log r,\log r)+(0,2\pi)i)$. $\Gamma_1$ is in red and traversed in the counterclockwise direction. $\Gamma_2$ is in blue and traversed in the clockwise direction.}\label{contour}
		\end{figure}
		Since the integral  near $z=1$  is dominant in \eqref{singular trapezoidal rule error integral form}, it holds the following estimate on the upper edge of the branch cut $[r^{-1},r]$
		\begin{align*}
			& \int_{r^{-1}}^1\frac{u(z)}{1-z^{-n}}\frac{dz}{iz} + \int_{1}^r\frac{u(z)}{z^n-1}\frac{dz}{iz} \\
			=& \int_{1-n^{-1/2}}^1\frac{u(z)}{1-z^{-n}}\frac{dz}{iz} + \int_{1}^{1+n^{-1/2}}\frac{u(z)}{z^n-1}\frac{dz}{iz} + O(e^{-n^{1/2}}) \\
			=& \int_{1-n^{-1/2}}^1\frac{u(1)+C_3(-i\log z)^{\alpha-1}}{1-z^{-n}}\frac{dz}{iz} \\&+  \int_{1}^{1+n^{-1/2}}\frac{u(1)+C_3(-i\log z)^{\alpha-1}}{z^n-1}\frac{dz}{iz} + o(n^{-\alpha}) \\
			=& C_3\int_{1-n^{-1/2}}^1\frac{(-i\log z)^{\alpha-1}}{1-z^{-n}}\frac{dz}{iz} + C_3\int_{1}^{1+n^{-1/2}}\frac{(-i\log z)^{\alpha-1}}{z^n-1}\frac{dz}{iz} + o(n^{-\alpha}) \\
			=& C_3\int_{0}^1\frac{(-i\log z)^{\alpha-1}}{1-z^{-n}}\frac{dz}{iz} + C_3\int_{1}^{\infty}\frac{(-i\log z)^{\alpha-1}}{z^n-1}\frac{dz}{iz} + o(n^{-\alpha}) \\
			=& C_3\int_0^\infty\frac{(ix)^{\alpha-1}}{1-e^{nx}}\frac{dx}{i} + C_3\int_0^\infty\frac{(-ix)^{\alpha-1}}{e^{nx}-1}\frac{dx}{i} + o(n^{-\alpha}) \\
			=& C_3\cos\left(\frac\pi2\alpha\right)\int_0^\infty\frac{x^{\alpha-1}}{e^{nx}-1}dx + o(n^{-\alpha}) \\
			=& C_3\cos\left(\frac\pi2\alpha\right)\Gamma(\alpha)\zeta(\alpha)n^{-\alpha} + o(n^{-\alpha}),
		\end{align*}
		where $\Gamma(\cdot)$ is the Gamma function, and $\zeta(\cdot)$ is the Riemann-zeta function. Since $(1-n^{-1/2})^n=O(e^{-n^{1/2}})$ as $n\to\infty$, we are free to add or remove integrals other than $[1-n^{-1/2},1+n^{-1/2}]$. Same estimate holds for the integral on the lower edge of the branch cut $[r^{-1},r]$. The conclusion follows from the two  estimates combined together.
	\end{proof}
	
	Assume that $f=O((t(2\pi-t))^{\alpha-1})$ near the two endpoints for some $\alpha>0$.  Since the transformed integrand  $w'(t)f(w(t))=O((t(2\pi-t))^{\alpha p-1})$, from Theorem \ref{error of nonperiod trapezoidal rule}, the error of \eqref{substituted trapezoidal rule} is $O(n^{-\alpha p})$. By choosing $p$ large enough, theoretically we can obtain high order convergence. However, too large $p$ is numerically unstable due to the rounding errors in float-point arithmetic. In practice, $p=5$ or $7$ would be sufficient. 
	
	When the target point $x$ is very close to $\Gamma$, one needs to use the modified trapezoidal rule \eqref{modifedtrap} after the variable substitution. In the non-periodic case, it will introduce an additional error term compared to \eqref{singquaderror}, as shown in the following theorem.
	
	\begin{theorem}\label{thm4}
		Under the assumptions of Theorem \ref{error of nonperiod trapezoidal rule}, assume further that $K$ is analytic on the unit circle $|z|=1$. When $n\to\infty$, it holds
		\begin{align}\label{correction}
			&T_{2n}(K_nu)-I(Ku)  \notag \\ =&\int_{r^{-1}}^r\frac{z_-^nK(z)-z^{-n}K_n^+(z)-z^{n}K_n^-(z)}{z^{n}-z^{-n}}\left(u(1)-u(e^{2\pi i})\right)\frac{dz}{iz} + O(n^{-\alpha}), 
		\end{align}
		where $K_n,K_n^+,K_n^-$ are the same as in Section \ref{subsection nearly singular integral} and 
		\[z_-^n = \begin{cases}
			z^{-n}, & |z|\geqslant 1, \\
			z^n, & |z|<1.
		\end{cases}\]
	\end{theorem}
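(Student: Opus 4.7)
The plan is to generalize the residue-theorem argument behind Theorem \ref{modifiedtrap} to the branch-cut setting of Theorem \ref{error of nonperiod trapezoidal rule}. First I would establish the branch-cut analog of \eqref{interpolatory quadrature error integral form}: replacing the closed circles $|z|=R$ and $|z|=R^{-1}$ by the keyhole contours $\Gamma_1,\Gamma_2$ of Figure \ref{contour}, I claim that
\begin{equation*}
T_{2n}(K_n u)-I(Ku)=\int_{\Gamma_1}\!\Bigl(\tfrac{K_n(z)}{z^{2n}-1}-K_n^-(z)\Bigr)u(z)\tfrac{dz}{iz}+\int_{\Gamma_2}\!\Bigl(\tfrac{K_n(z)}{z^{-2n}-1}-K_n^+(z)\Bigr)u(z)\tfrac{dz}{iz}.
\end{equation*}
The derivation is identical to that of \eqref{interpolatory quadrature error integral form}: $K_n^-u$ is analytic in the outer region bounded by $\Gamma_1$ and $K_n^+u$ is analytic in the inner region bounded by $\Gamma_2$, so subtracting the contour integrals of these analytic pieces from the standard trapezoidal residue representation replaces the circular boundary of the original annulus by the keyhole wrapping the branch cut.

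Next, each $\Gamma_j$ decomposes into a circular arc at $|z|=r^{\pm 1}$ plus two straight paths on the upper and lower edges of $[r^{-1},r]$. On the circular arcs, the Cauchy estimates \eqref{first term} and \eqref{second term}, applied with $R=r$, show the integrands are of size $O(r^{-2n})$, so these pieces are exponentially small and absorbed into the $O(n^{-\alpha})$ term. On the cut edges, using the asymptotic expansions of $u$ from Theorem \ref{error of nonperiod trapezoidal rule}, I would substitute the boundary limits $u(1)$ on the upper edge and $u(e^{2\pi i})$ on the lower edge; the remaining $C_3$-, $C_4$-singular tails, integrated against the (locally bounded) kernels, contribute an extra $O(n^{-\alpha})$ by the same $\Gamma(\alpha)\zeta(\alpha)n^{-\alpha}$ computation that concludes the proof of Theorem \ref{error of nonperiod trapezoidal rule}.

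Combining the upper- and lower-edge contributions of each $\Gamma_j$ then produces the common jump factor $u(1)-u(e^{2\pi i})$, with the cut-edge part of $\Gamma_1$ covering $z\in[1,r]$ and that of $\Gamma_2$ covering $z\in[r^{-1},1]$ (as $\Gamma_1,\Gamma_2$ wrap the outer and inner halves of the cut respectively). For $|z|\geqslant 1$ the surviving cut kernel is $\tfrac{K_n(z)}{z^{2n}-1}-K_n^-(z)$, while for $|z|<1$ it is $-\bigl(\tfrac{K_n(z)}{z^{-2n}-1}-K_n^+(z)\bigr)$. Using $K=K_n^-+K_n+K_n^+$ together with the elementary identities $\tfrac{z^{-n}}{z^n-z^{-n}}=\tfrac{1}{z^{2n}-1}$ and $\tfrac{z^{n}}{z^n-z^{-n}}=-\tfrac{1}{z^{-2n}-1}$, a short algebraic manipulation rewrites both cases uniformly as $\tfrac{z_-^n K(z)-z^{-n}K_n^+(z)-z^n K_n^-(z)}{z^n-z^{-n}}$, with the piecewise $z_-^n$ encoding the choice of the exponentially-decaying representation on each side of $z=1$; this yields exactly the single integral over $[r^{-1},r]$ in the stated formula.

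The main obstacle will be twofold: first, tracking orientations of the four cut-edge paths so that combining $\Gamma_1$ and $\Gamma_2$ produces $u(1)-u(e^{2\pi i})$ with the correct sign; second, justifying the Cauchy-principal-value interpretation near the branch point $z=1$, where the poles $e^{\pi ik/n}$ of $\tfrac{1}{z^{2n}-1}$ accumulate exactly at the singular point of $u$, which is already the delicate ingredient in the proof of Theorem \ref{error of nonperiod trapezoidal rule} and must now be carried out uniformly in the presence of the bounded multiplier $K_n$.
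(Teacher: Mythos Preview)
Your proposal is correct and follows essentially the same route as the paper. The only difference is the order of operations: the paper first carries out the algebraic rewriting (your final step) on the full keyhole contours, obtaining the single integrand $\tfrac{z_-^nK-z^{-n}K_n^+-z^nK_n^-}{z^n-z^{-n}}\,u$ over $\Gamma_1\cup\Gamma_2$, and only then drops the circular arcs and extracts the jump $u(1)-u(e^{2\pi i})$ with remainder $O(n^{-\alpha})$ exactly as in Theorem~\ref{error of nonperiod trapezoidal rule}. Two small cautions: (i) your displayed starting identity should carry a minus sign on the $\Gamma_2$ integral (compare \eqref{singular trapezoidal rule error integral form}, since $\Gamma_2$ is oriented clockwise), which is precisely the sign-tracking you already flag; (ii) invoking \eqref{first term}--\eqref{second term} literally ``with $R=r$'' produces a $0/0$, so on the circular arcs you must instead use that $K$ itself is analytic in a slightly larger annulus than the one bounded by $\Gamma_1,\Gamma_2$ (or shrink $r$, which by the Remark following the theorem changes nothing).
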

	\begin{proof}
		From equation \eqref{interpolatory quadrature error integral form}, it holds
		\begin{align}
			& T_{2n}(K_nu)-I(Ku) \notag\\
			=& \int_{\Gamma_1}\left(\frac{K_n(z)}{z^{2n}-1}-K_n^-(z)\right)u(z)\frac{dz}{iz} - \int_{\Gamma_2}\left(\frac{K_n(z)}{z^{-2n}-1}-K_n^+(z)\right)u(z)\frac{dz}{iz} \notag\\
			=& \int_{\Gamma_1}\frac{K(z)-K_n^+(z)-z^{2n}K_n^-(z)}{z^{2n}-1}u(z)\frac{dz}{iz} - \int_{\Gamma_2}\frac{K(z)-z^{-2n}K_n^+(z)-K_n^-(z)}{z^{-2n}-1}u(z)\frac{dz}{iz} \notag\\
			=& \int_{\Gamma_1}\frac{z^{-n}K(z)-z^{-n}K_n^+(z)-z^{n}K_n^-(z)}{z^{n}-z^{-n}}u(z)\frac{dz}{iz} \notag \\&- \int_{\Gamma_2}\frac{z^nK(z)-z^{-n}K_n^+(z)-z^nK_n^-(z)}{z^{-n}-z^n}u(z)\frac{dz}{iz} \notag\\
			=& \int_{\Gamma_1\cup\Gamma_2}\frac{z_-^nK(z)-z^{-n}K_n^+(z)-z^{n}K_n^-(z)}{z^{n}-z^{-n}}u(z)\frac{dz}{iz} \notag\\
			=& \int_{r^{-1}}^r\frac{z_-^nK(z)-z^{-n}K_n^+(z)-z^{n}K_n^-(z)}{z^{n}-z^{-n}}\left(u(1)-u(e^{2\pi i})\right)\frac{dz}{iz} + O(n^{-\alpha}). \notag 
		\end{align}
	\end{proof}
	
	\begin{remark}
		For the first error term in \eqref{correction}, the integral interval can be replaced by any interval $[a,b]$ with $0\leqslant a<1$ and $1<b\leqslant\infty$, because the integrand decays exponentially when $z$ is away from 1. In most cases, $u(1)=u(e^{2\pi i})=0$, so there's no need to calculate this integral. In the remaining cases, it is easy to calculate the error term numerically and subtract it from the right hand side of equation \eqref{correction}. Our idea is to first replace the interval to $[e^{-1},e]$, substitute $z=e^{it}$ and then apply $2n+1$ points trapezoidal rule in $[-i,i]$, or shift to $[\pi-i,\pi+i]$ if the singularity $t_0$ is close to the imaginary axis. More specifically, if we use \eqref{laplace double layer potential swapped} to swap the singularity, the corresponding integral in \eqref{correction} is (the constant multiplier is ignored)
		\begin{align*}
			& \int_{- i}^{ i}\frac{e^{int}\frac{e^{-in(t-t_0)}+e^{-i(n+1)(t-t_0)}}{1-e^{-i(t-t_0)}}+e^{-int}\frac{e^{in(t-\overline{t_0})}+e^{i(n+1)(t-\overline{t_0})}}{1-e^{i(t-\overline{t_0})}}}{e^{int}-e^{-int}}dt \\
			=& \int_{- i}^{i}\frac{e^{int_0}\left(\cot\frac{t-t_0}{2}-\cot\frac{-t_0}{2}\right)-e^{-in\overline{t_0}}\left(\cot\frac{t-\overline{t_0}}{2}-\cot\frac{-\overline{t_0}}{2}\right)}{2\sin(nt)}dt \\
			:=& \int_{- i}^{ i}\frac{M_n(t_0,t)}{2\sin(nt)}dt.
		\end{align*}
		We approximate this integral by
		\[ \begin{dcases}
			\frac{i}{n}\sum_{k=-n}^n{}''\frac{M_n(t_0,\pi+ik/n)}{2\sin(ik)} + 2\pi i\sum_{k=n}^{2n}{}''\frac{M_n(t_0,k\pi/n)}{2(-1)^kn}, & \mbox{ if }\Re t_0\in[0,\frac{\pi}{2}), \\
			\frac{i}{n}\sum_{k=-n}^n{}''\frac{M_n(t_0,ik/n)}{2\sin(ik)}, &  \mbox{ if } \Re t_0\in[\frac{\pi}{2},\frac{3\pi}{2}], \\
			\frac{i}{n}\sum_{k=-n}^n{}''\frac{M_n(t_0,\pi+ik/n)}{2\sin(ik)} + 2\pi i\sum_{k=0}^{n}{}''\frac{M_n(t_0,k\pi/n)}{2(-1)^kn}, &  \mbox{ if } \Re t_0\in(\frac{3\pi}{2},2\pi),
		\end{dcases}\]
		where we use the residue theorem first and then apply the trapezoidal rule for $\Re t_0\in[0,\frac{\pi}{2})\cup(\frac{3\pi}{2},2\pi]$,  and use the trapezoidal rule only for $\Re t_0\in[\frac{\pi}{2},\frac{3\pi}{2}]$.
	\end{remark}
	
	From Theorem \ref{thm4}, although $T_{2n}(K_nu)$ cannot essentially improve the convergence rate, it will reduce the error caused by the near singularity, which is the dominant part when $n$ is small, and make the error decay uniformly as $O(n^{-\alpha p})$ for arbitrarily close targets.
	
	To illustrate the performance of $T_{2n}(K_n u)$, we again test three integrals using the three quadrature methods as mentioned in subsection \ref{rootfinding}. As before, we use the root finding method to find an approximate pole $\hat z_0$ of $K$ and apply the singularity swapping method introduced in Section \ref{singularity swapping}. All the three integrals have the same nearly singular kernel $(z-1.1)^{-1}$. The three density functions are $e^{-(z+1)^{-2}}$, $(1/\sqrt z+\sqrt z)^3$ and $1/\sqrt z+\sqrt z$, respectively. The first one $e^{-(z+1)^{-2}}$ is $C^\infty$ on $|z|=1$ but not analytic.  The second function $(1/\sqrt z+\sqrt z)^3$ is $C^2$ and the third function $1/\sqrt z+\sqrt z$ is just continuous. Figure \ref{test quadratures oscillatory}(a) shows for the first function all three quadratures are essentially sub-geometrically convergent. Figure \ref{test quadratures oscillatory}(b) and (c) show the convergence rates for the second and third functions are both algebraic, which is consistent with the error analysis given by equation \eqref{singular interpolatory quadrature error asymptotic form}. Throughout all the examples, using $\hat z_0$ causes the convergence speed to be slightly slower than using the exact $z_0$, but it is still much faster than the classic trapezoidal rule.
	
	\begin{figure}[htbp]
		\centering
		\includegraphics[scale=0.9]{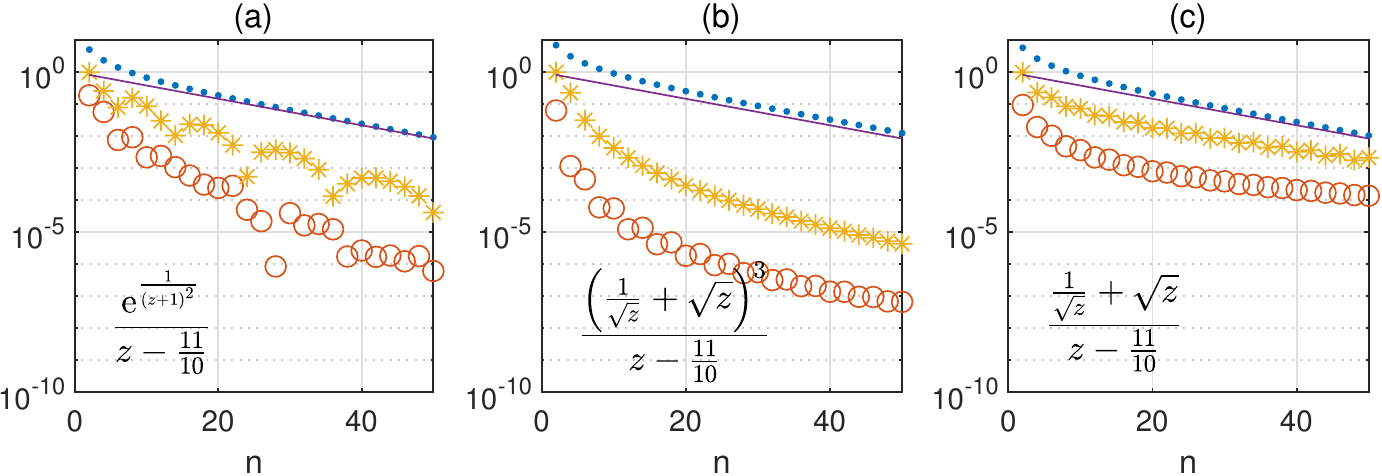}
		\caption{Relative error of trapezoidal rule (dots), interpolation quadrature given by \eqref{reciprocal} (circles) and interpolation quadrature \eqref{reciprocal} with $z_0$ replaced by $\hat z_0$ give by equation \eqref{approximation of pole} (stars) for three functions with the number of quadrature nodes $n$ ranging from 2 to 50. The solid curves is $1.1^{-n}$.}\label{test quadratures oscillatory}
	\end{figure}
	
	\section{Numerical examples}\label{numerical examples}
	In this section, we test the modified trapezoidal rule with singularity swapping method for the near field evaluation in several examples. To verify the accuracy, we construct an artificial solution for Laplace's and Helmholtz equations by the point source technique. Namely, for the interior problem, assuming the computational domain is $\Omega$, we place a point source outside of $\Omega$ and impose the exact boundary condition on $\partial \Omega$. When the boundary integral equation is solved correctly, the point source solution can be recovered in $\Omega$. Similar technique applies to the exterior problem. Details are given in \cite{JL2014}. All the experiments are carried out by MATLAB on a laptop with an Intel CPU inside.  
	\subsection{Laplace interior Dirichlet problem in a star-shaped domain}\label{laplacenum}
	In this example, we test the Laplace's equation $\Delta u=0$ in the interior of a star-shaped domain $D$ with boundary parameterized  by $g(t)=(1+0.3\cos(5t))e^{it},\ t\in[0,2\pi]$. Impose the Dirichlet boundary conditions $u=u_0$ on $\partial D$ with $u_0(x)=\log|x-x_0|$ and $x_0=3+3i$. We discretize $\partial D$  with equal spaced points in the parameter space $[0,2\pi]$. The solution to the Laplace's equation is represented by either single or double layer potential, which results in two different integral equations in $\phi$. They can be solved using the Nystr\"om method \cite{Kress2010} and the proposed quadrature. Once $\phi$ is found, we evaluate the layer potential in $D$ based on the value of $\phi$ at the discretization points, and compare the numerical solution with the exact solution $u_0$.  For close evaluation, we use \eqref{singswap} to evaluate the single layer potential and \eqref{laplace double layer potential swapped} for the double layer potential.
	
	In our first test, we solve the integral equation with $n=64$ and evaluate the field on a uniform grid that covers the domain with spacing $0.01$.  Results of relative error $E_{rel}$ for both single and double layer potentials are shown in Figure \ref{error}. The blank point indicates that the calculated result of that point is ``exact" in float point arithmetic. Far from the boundary,  both trapezoidal rule and the singularity swapping method give the same accuracy. Near the boundary, trapezoidal rule gets no digit whereas the proposed quadrature method still gives at least 10 digits accuracy.
	\begin{figure}[ht]
		\centering
		\includegraphics[scale=0.8]{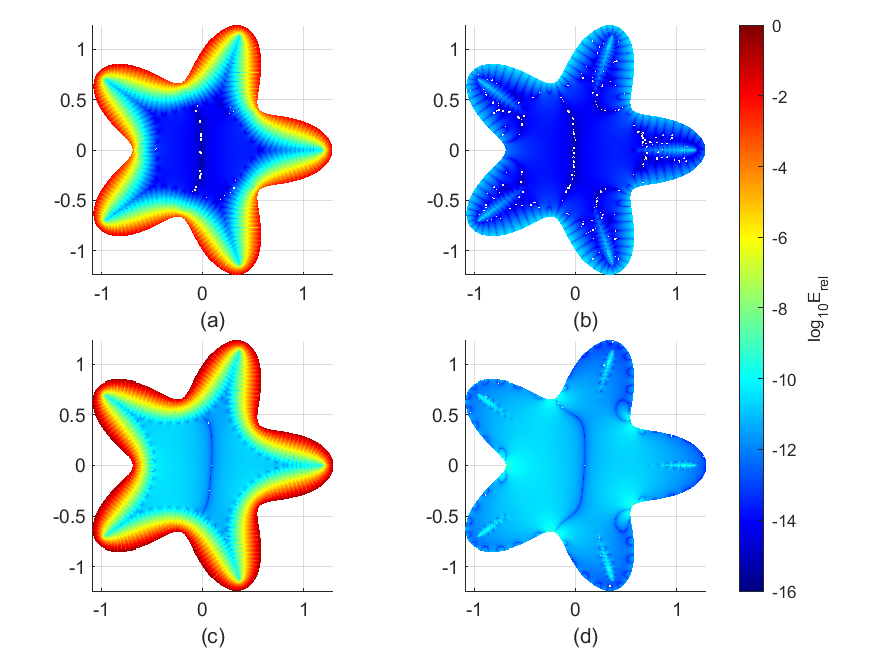}
		\caption{Near field evaluation for Laplace equation. (a) and (c): Results based on classic trapezoidal rule with (a) for single layer potential and (c) for double layer potential. (b) and (d): Results based on the modified trapezoidal rule with (b) for single layer potential and (d) for double layer potential.}\label{error}
	\end{figure}
	
	In the second test, we solve the same problem using $n=8,16,\cdots,128$ and evaluate the solution at a specific target point $x=0.5+i$ to test the convergence rate. The target point $x$ is very close to the boundary. Its nearest two preimages $g^{-1}(x)$ are $t_1=1.058224887371462 + 0.045168525183462i$ and $t_2=0.422785911369020 - 0.293359723744667i$. The preimage of $x$ under the parameter $t$ and the parameter $z_0$ on the unit circle is related by $z_0=e^{it}$. From \eqref{interpolatory quadrature error asymptotic form}, the convergence rate is $O(e^{-n(|\Im t_1|+|\Im t_2|)})$. The error of single layer case is smaller by a factor of $n$ because the estimate \eqref{cauchy's estimate} can be improved to $|c_n|=O(n^{-1}|z_0|^{-n})$ for $K(z)=\log(z-z_0)$. Numerical results given in Figure \ref{convergence} confirm the exponential decay of the error.
	\begin{figure}[ht]
		\centering
		\includegraphics[scale=0.8]{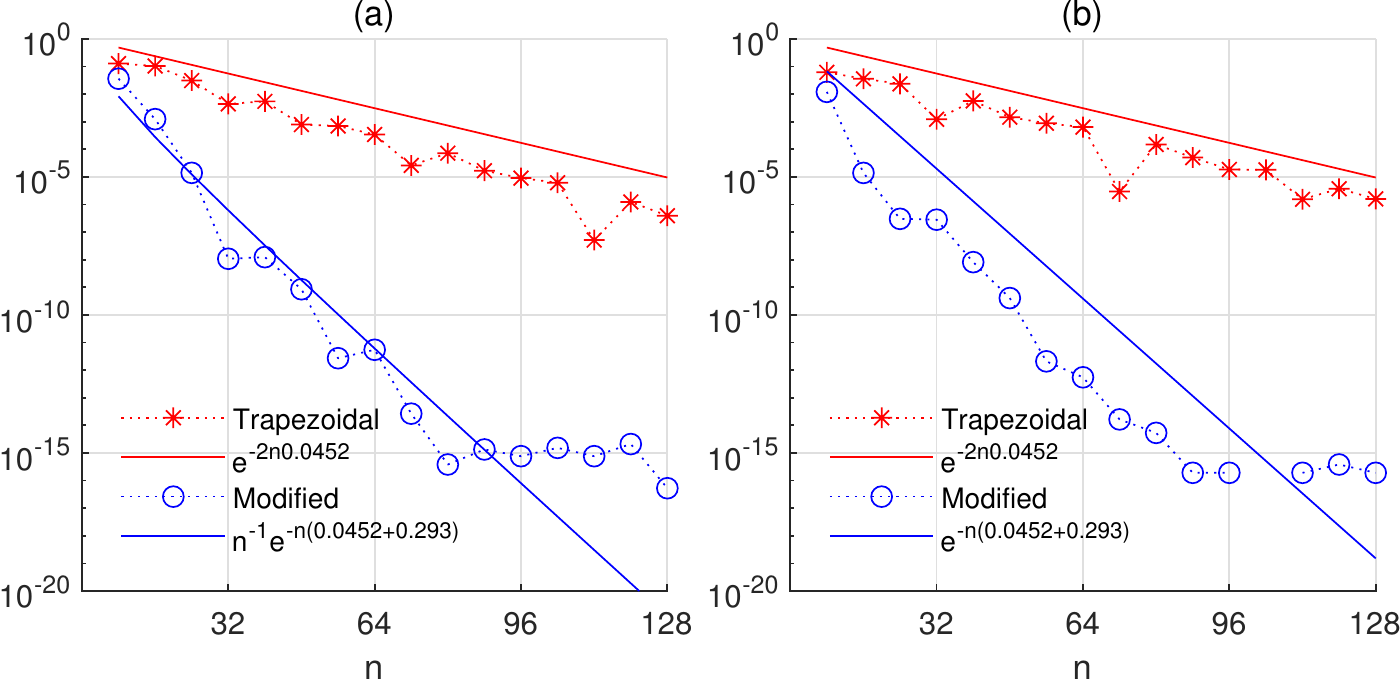}
		\caption{Relative error of evaluating $u(x)$ for Laplace's equation using trapezoidal rule (stars) and modified trapezoidal (circles) and their theoretical convergence rate (solid curves). The red and blue lines plot the predicted convergence rate. (a) Single layer potential. (b) Double layer potential.}\label{convergence}.
	\end{figure}
	
	In the third test, we solve the same problem with $n=128$ and evaluate the field on a slice of $D$ defined by the mapping $g$ of the square $[1.66\pi,1.76\pi]\times[10^{-8},0.15]$ in $\mathbb C$, i.e.,
	\[D = \{g(s): \Re s\in[1.66\pi,1.76\pi], \Im s\in[10^{-8},0.15]\},\]
	which is extremely close to the boundary. We use $300\times300$ grid points that are uniform in $\Re s$ and logarithmic in $\Im s$. The results of this test are shown in Figure \ref{extremely}. One can see that the single layer potential keeps 13 digits accuracy except at $\Re s=\frac{224}n\pi\approx5.5$. However, for the double layer potential, Figure \ref{extremely}(b) shows the accuracy around each source drops to 10 digits. Using the subtraction technique, the accuracy improves to 14 digits, as shown in Figure \ref{extremely}(c), which implies the subtraction technique is every effective for the field evaluation near the source.
	\begin{figure}[ht]
		\centering
		\includegraphics[scale=0.8]{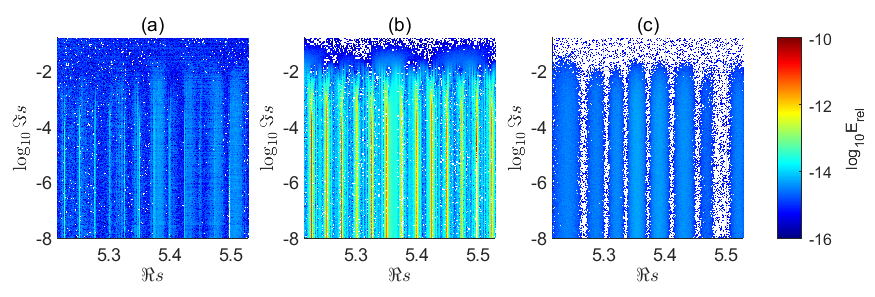}
		\caption{Test on the extremely close field evaluation for Laplace's equation. (a) Single layer potential without substraction technique. (b) Double layer potential without substraction technique. (c) Double layer potential with substraction technique.}  \label{extremely}
	\end{figure}
	
	\subsection{Helmholtz interior Dirichlet problem in an inkblot-shaped domain}\label{helmnum}
	In this example, we test our method on a piecewise analytic curve by solving an exterior Dirichlet problem of Helmholtz equation
	\[\begin{cases}
		\Delta u+\kappa^2 u = 0, & \text{in } \mathbb R^2\backslash\overline D, \\
		u = \frac{i}4H_0^{(1)}(\kappa|x-x_0|), & \text{on } \partial D, \\
		\frac{\partial u}{\partial r} - i\kappa u = o(r^{-\frac12}), & r=|x|\to\infty,
	\end{cases}\]
	with $\kappa=3$, $x_0 = 1+i$, and the boundary $\partial D$ parameterized by
	\[g(t) = \left (4+2|\cos(4t)|\sin(4t)\right )e^{it}, \quad t\in[0,2\pi].\]
	It is clear that the exact solution is $u_0(x):=\frac{i}4H_0^{(1)}(\kappa|x-x_0|)$. The contour $\partial D$ has eight corners with interior angle $\arctan\frac43$ or $2\pi-\arctan\frac43$, and the corresponding $\alpha$ in \eqref{alpha} is
	\[\alpha = \frac{1}{2-\frac1\pi\arctan\frac43} = 0.586567797561351\dots\]
	
	The contour $\partial D$ is divided into eight analytic pieces. For each piece, the parameter is shifted and rescaled to $[0,2\pi]$ and then substituted by $w(t)$ given by \eqref{asin transform} with $p=7$. Each piece is discretized with $2n$ points uniformly in $t$. Similar to the first example, solution to the Helmholtz equation is represented by either single or double layer potential.
	
	The first test is shown in Figure \ref{inkerror}. This example evaluates the field with $n=32$ on a uniform grid that covers $[-6,6]^2$ with spacing $0.1$. Far from the boundary, both trapezoidal rule and singularity swapping method give 12 digits. On the other hand, trapezoidal rule gets no digit near the boundary, whereas the proposed quadrature gets 8 digits accuracy in most of the near boundary regions and drops to 6 digits in the concave part of the boundary.
	\begin{figure}[ht]
		\centering
		\includegraphics[scale=0.8]{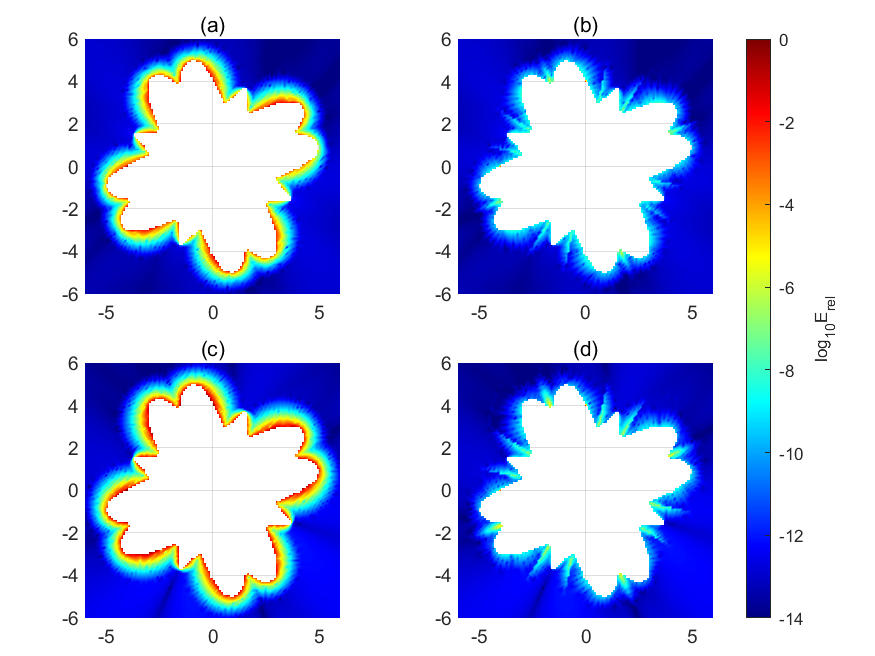}
		\caption{Relative error for the close field evaluation of Helmholtz equation.  (a) and (c): Results based on classic trapezoidal rule with (a) for single layer potential and (c) for double layer potential. (b) and (d): Results based on the modified trapezoidal rule with (b) for single layer potential and (d) for double layer potential.}\label{inkerror}
	\end{figure}
	
	The second test is shown in Figure \ref{inksingular}, this example tests the singular behavior of $\phi$ near a cornor $g(\frac\pi8)$ with $n=64$. The result confirms the estimates in \eqref{corner-single} and \eqref{corner-double},  which shows the solution $\phi$ is very accurate through our quadrature method.
	\begin{figure}[ht]
		\centering
		\includegraphics[scale=0.8]{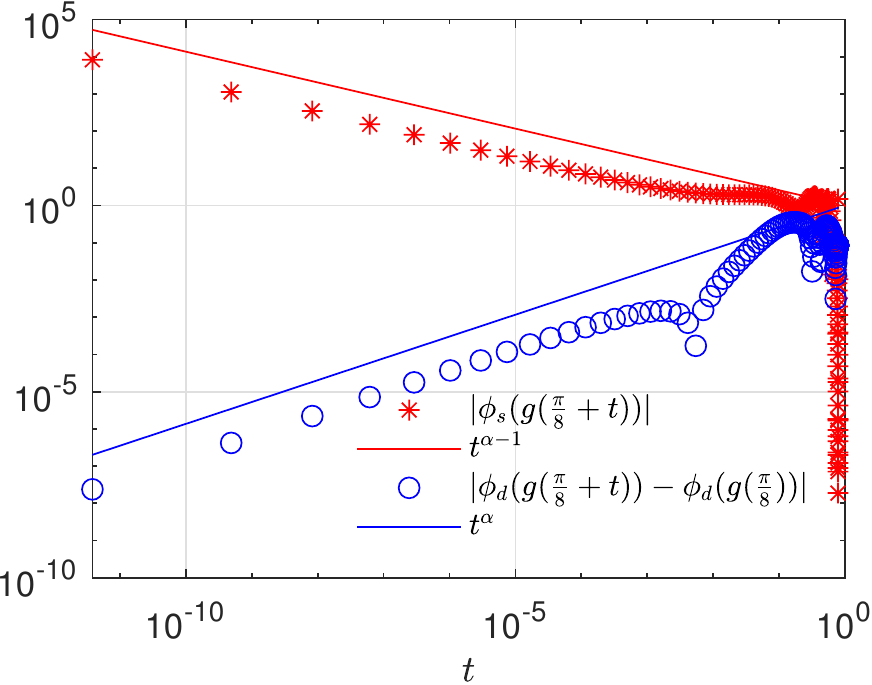}
		\caption{Singular behavior of $\phi$ near the corner. $\phi_s$ (stars): density obtained by the single layer representation. $\phi_d$ (circles): density obtained by the double layer representation. The solid curves is the theoretical behavior. Details are given in  Example \ref{helmnum}.}\label{inksingular}
	\end{figure}
	
	In  the third test, we solve the same problem using $n=4,8,\cdots,64$ and evaluate the field at a specific target point $x=3+3i$ to test the convergence rate. The target point $x$ is very close to the boundary. Its nearest two preimages of $x$ under transformed parameterization are
	$3.047236160485154 - 0.050219779775915i$ and $4.428039970225041 - 0.649002137868717i$. The results of this test are shown in Figure \ref{inkconvergence}. When $n\leqslant36$, the convergence rate of proposed method is exponential and mainly govern by the second preimage. When $n\geqslant40$, the algebraic term $n^{-\alpha p}$ becomes the dominant part of the error.
	\begin{figure}[ht]
		\centering
		\includegraphics[scale=0.8]{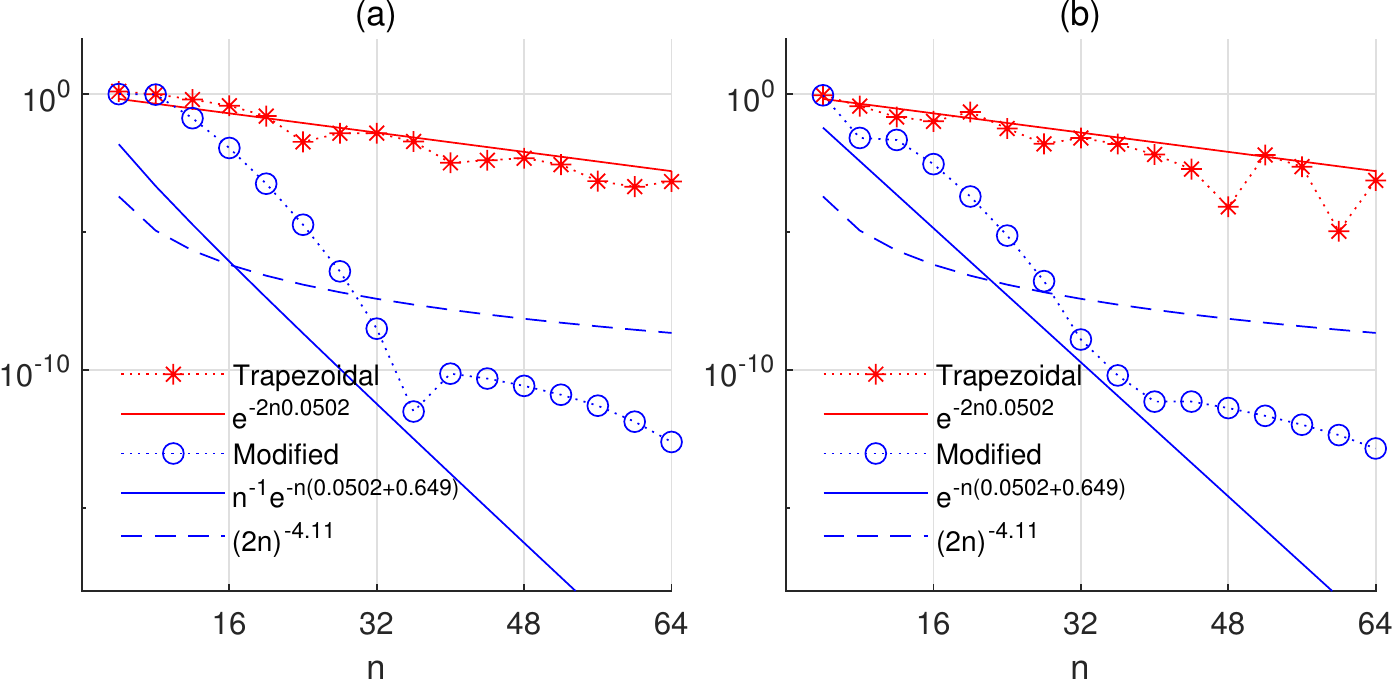}
		\caption{Relative error of evaluating $u(x)$ of Helmholtz equation using trapezoidal rule (stars) and modified trapezoidal (circles) and their theoretical convergence rate (solid curves). The red and blue lines plot the predicted convergence rate. (a) Single layer potential. (b) Double layer potential.}\label{inkconvergence}
	\end{figure}
	
	In the fourth test, we solve the same problem using $n=64$ and evaluate the field in a sector defined by
	\[\left\{g\left(\frac\pi8\right)+re^{i\theta}: r\in[10^{-8},1], \theta\in\left[\frac\pi8-\arctan\frac12,\frac\pi8+\arctan\frac12\right]\right\},\]
	which is extremely close to the corner. We use a $100\times100$ grid that is uniform in $\theta$ and logarithmic in $r$. The results of this test are shown in Figure \ref{inkextremely}. The single layer representation gives 7 digits accuracy whereas the double layer potential only gives 4 digits accuracy but can be improved to 9 digits with subtraction techniques. Compared to the field evaluation that are far away from the corner, several digits are lost for field evaluation in the neighborhood of corners. The loss can be attributed to the large value of the intergrand on the interval $[r^{-1}, r]$ in \eqref{singular trapezoidal rule error integral form}. However, it is still much better than the classic method.
	
	\begin{figure}[ht]
		\centering
		\includegraphics[scale=0.8]{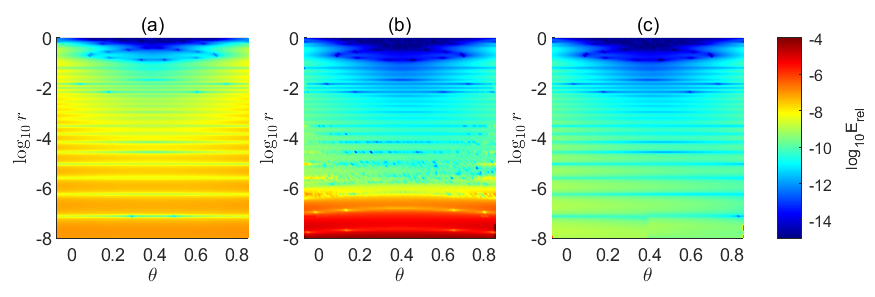}
		\caption{Test on the extremely close corner field evaluation for Helmholtz equation. (a) Single layer potential without substraction technique. (b) Double layer potential without substraction technique. (c) Double layer potential with substraction technique.}\label{inkextremely}
	\end{figure}
	
	\section{Conclusion}\label{conclusions}
	We present a new version of the singularity swapping method for the close field evaluation of two dimensional layer potentials. Our approach combines the classic trapezoidal rule and the analytic expansion of singular kernels.  The resulted quadrature formula is easy to use and achieves exponential convergence. Rigorous error analysis is provided based on the Cauchy integral and residue theorem. The method is also extended to piecewise analytic curves by a substitution of variables and achieves a high order convergence rate. Numerical results demonstrate the effectiveness of the proposed method in both smooth and non-smooth geometries.
	
	We plan to extend our work on several aspects. Our immediate next step is to consider the application to other elliptic PDE kernels, such as Navier equations in elasticity and Stokes equations in fluid mechanics. We also plan to develop fast boundary integral solvers for multiple wave scattering problems based on the proposed quadrature, and generalize our work to three dimensional close evaluation problems.

\end{document}